\newcommand{\stsets}[1]{\mathbb{#1}}
\newcommand{\R}{\stsets{R}}
\newcommand{\N}{\stsets{N}}
\newcommand{\Z}{\stsets{Z}}
\renewcommand{\P}{\mathbf{P}}
\newcommand{\E}{{\bf E}\,}
\newcommand{\var}{{\bf var}\,}
\DeclareMathOperator{\card}{card}
\DeclareMathOperator{\one}{{1\hspace*{-0.55ex}I}}
\newcommand{\cond}{\hspace*{1ex} \rule[-1ex]{0.15ex}{3ex}\hspace*{1ex}}
\newcommand{\thru}{,\dotsc,}
\renewcommand{\vec}[1]{\mathbf{#1}}
\newcommand{\ti}{\to\infty}
\newcommand{\ssp}{\hspace{0.5pt}}
\newcommand{\seg}{see, \hbox{e.\ssp g.,}\ }
\newcommand{\ie}{\hbox{i.\ssp e.}\ }
\newcommand{\iid}{\hbox{i.\ssp i.\ssp d.}\ }
\renewcommand{\epsilon}{\varepsilon}
\renewcommand{\phi}{\varphi}
\newcommand{\eps}{\varepsilon}
\newcommand{\ppp}{\mathcal{P}}
\newcommand{\fff}{\mathcal{F}}
\newlength{\querylen}
\newenvironment{AMS}{\vspace*{0.5cm}\noindent {\bf
    AMS 2010 Subject Classification:} } {\vspace{0.5cm}}
\newenvironment{Ack}{\vspace*{1.5cm} \begin{center}{\bf
    Acknowledgements}\end{center}\vspace*{0cm}\nopagebreak
   \rm } {\vspace{0.5cm}}
\newcommand{\keywords}[1]
  {\begin{center}
  \begin{minipage}{315.83pt}
  \small
  \noindent \emph{Keywords:}~{\textrm{#1}}
  \end{minipage}
  \end{center}
  \normalsize
  }
\newtheorem{thm}{\noindent Theorem}[section]
\newtheorem{lem}{\noindent Lemma}[section]
\newtheorem{rem}{\noindent Remark}[section]
\newcommand{\BF}{\mathrm{BF}}
\newcommand{\DB}{\mathrm{DB}}
\newcommand{\td}{\stackrel{\mathcal{D}}{\to}}
\newcommand{\sF}{\mathcal{F}}
\newtheorem{Th A1}{Theorem A1}
\newtheorem{Th A2}{Theorem A2}
\newtheorem{Th B}{Theorem B}
\newcommand{\n}{\eta} 
\newcommand{\ttot}{\nu_{\text{total}}}
\newcommand{\czeta}{\check{\zeta}}
\newcommand{\cchi}{\check{\chi}}
\newcommand{\taugre}{\tau_{>k}}
\newcommand{\Bern}{\ensuremath{\mathsf{Bern}}}
\newcommand{\tzeta}{\widetilde{\zeta}}
\newcommand{\stleq}{\leq_{\mathrm{st}}}
\begin{document}

\title{Bit flipping and time to recover}

\author{Anton Muratov\thanks{Chalmers University of Technology,
    Department of Mathematical Sciences, SE-412 96 Gothenburg, Sweden. Email: \texttt{[muratov|sergei.zuyev]@chalmers.se}} \and 
\addtocounter{footnote}{-1} Sergei Zuyev\footnotemark}

\maketitle

\begin{abstract}
  We call `bits' a sequence of devices indexed by positive integers,
  where every device can be in two states: $0$ (idle) and $1$
  (active). Start from the `ground state' of the system when all bits are
  in $0$-state. In our first Binary Flipping (BF) model, the evolution of the system is the
  following: at each time step choose one bit from a given
  distribution $\ppp$ on the integers independently of anything else,
  then flip the state of this bit to the opposite.  In our second
  Damaged Bits (DB) model a `damaged' state is added: each selected
  idling bit changes to active, but selecting an active bit changes
  its state to damaged in which it then stays forever.

  In both models we analyse the recurrence of the system's ground state when
  no bits are active. We present sufficient conditions for both BF and DB
  models to show recurrent or transient behaviour, depending on the properties
  of $\ppp$. We provide a bound for fractional moments of the return time to
  the ground state for the BF model, and prove a Central Limit Theorem for the
  number of active bits for both models.

\end{abstract}

\smallskip
\keywords{binary system; bit flipping; random walk on a countable
  group; Markov chain recurrence; critical behaviour}

\smallskip
\begin{AMS} Primary 60J27; Secondary 60J10; 68Q87 \end{AMS} 

\newpage

\section{Introduction and Model Description}
\label{sec:introduction}

In many areas of engineering and science one faces an array of devices
which possess a few states. In the simplest case these could be on-off
or idle-active states, in other situations a damaged state is also
possible. By the analogy with computer science, such a two-state
device can be called a \emph{bit} which in some case can also be
`damaged'. If the activation-deactivation cycles (flipping) or damage
produce themselves in a random fashion, a natural question to ask is
when, if at all, the system of bits recovers to some initial or
\emph{ground state} when none of the bits are active, allowing only
for idling and damaged bits to be seen. The time to recover may be
finite, but, in general, may also assume infinite values when the
system actually does not recover. In the latter case we speak of
\emph{transient} behaviour of the system. In the former case,
depending on whether the mean of the recover time exists or not, we
speak of a \emph{positive-} or of a \emph{null-recurrence}. Similarly
to random walk models, this classification is tightly related to the
exact random mechanism governing the change of the bits' states.

In the present paper we consider two basic models. In both models we
deal with a countably infinite array of bits which we index by the
positive integers $\N=\{1,2,\dots\}$. Initially, at step 0, the system
is in the ground state, i.e.\ all the bits are idling. At each next
step the index of the bit to change its state is sampled independently
of the current state of the bits from a given
probability distribution on $\N$,
\begin{displaymath}
\ppp=(p_1,p_2,\dots):  \sum_{i=1}^\infty p_i = 1.
\end{displaymath}
Without loss of generality, we may
assume that the bits are indexed in such a way that  
\begin{displaymath}
p_1\geq p_2\geq p_3 \geq \dots
\end{displaymath}
so that the bits most likely to change their state are
put first. We also assume that the support of the distribution $\ppp$
is unbounded, otherwise our models are described by a finite state
Markov chain with an evident behaviour. The main quantities of
interest are the number of steps $\tau$ until the first return to the
ground state, and $\n_n$ -- the number of bits being active at step
$n$. The two models are the following.

\paragraph{Binary Flipping (BF).}
\label{sec:binary-shoot-bs}
In this model the bits alternate between the two states: idle and
active. At step 0 all of the bits are idling. Let $\chi_1, \chi_2,\dots$
be \iid variables taken from the distribution $\ppp$. At the $i$th
step, $i=1,2,\dotsc$, the bit with index $\chi_i$ is \emph{flipped},
i.e.\ its state is changed to the opposite:
\begin{quote}
\emph{idle}$\ \leftrightarrow\ $\emph{active}
\end{quote}
If 0 and 1 represent, respectively, the idling and the active states,
the evolution of the system is described by a discrete time Markov
chain $\{\zeta_n\}_{n\geq 0}=
\{(\zeta^1_n,\zeta^2_n,\zeta^3_n,\dotsc)\}_{n \geq 0}$ with the state
space
\begin{displaymath}
\mathcal{X} = \left\{x \in \{0,1\}^\N:\ x \text{ has finitely many non-zeros}\right\}
\end{displaymath}
such that
$\zeta_0=\vec{0}$ is the zero-vector, and
\begin{equation}\label{eq:bfmc}
  \zeta^k_{n+1}=\begin{cases}
    \zeta^k_n, & k\neq \chi_{n+1},\\
    1-\zeta^k_n, & k=\chi_{n+1},
  \end{cases}\quad k=1,2,\dotsc ,n=0,1,2,\dotsc.
\end{equation}

The main quantity of interest is the number of steps required for
the system to return to the ground state, i.e.\ the following stopping time:
\begin{align*}
  \tau_{\mathrm{BF}} & = \min\{n\geq 1: \text{no bits are active at
    step\ } n\} \\
  & =\min\{n\geq 1: \zeta^k_n=0,\quad \forall k=1,2,\dotsc\}\,.
\end{align*}

\paragraph{Damaged Bits (DB).}
\label{sec:shoot-kill-sk}
This second model elaborates on the first one by adding a damaged
state to the bits. As in BF model above, we start with a sequence of idling bits and then
consecutively sample from $\ppp$ for the index of the bit to change
its state according to the following dynamics:
\begin{quote}
\emph{idle}$\ \to\ $\emph{active}$\ \to\ $\emph{damaged}.
\end{quote}
Thus in this model, the reversal of states is not possible: once a bit is
active it will never become idle again. An attempt to activate an already
active bit leads to its damage. Also, the damaged bits never
become functional again and if a damaged bit is selected to change
its state nothing happens: it just remains damaged.

If 0,1,2 encode idle, active and damaged states
respectively, the corresponding Markov chain $\{\zeta_n\}_{n\geq 0}$
with the state space
\begin{displaymath}
  \mathcal{Y} = \left\{y \in \{0,1,2\}^\N: y \text{ has finitely many non-zeros}\right\}
\end{displaymath}
is defined by
\begin{equation}\label{eq:dbmc}
  \zeta^k_{n+1}=\begin{cases}
    \zeta^k_n, & k\neq \chi_{n+1},\\
    \min\{2,1+\zeta^k_n\}, & k=\chi_{n+1},
  \end{cases}\quad k,n\in\N
\end{equation}
with the starting configuration $\zeta_0$ being the vector of all zeroes, $\zeta_0 = \vec{0}$.

Here again we are looking for the number of steps to return to the
ground state which is now understood as the collection of all of the
states without active bits:
\begin{align*}
  \tau_{\mathrm{DB}} & =\min\{n\geq 1: \text{no bits are active
  at step\ } n\}\\
& = \min\{n\geq 1: \zeta^k_n\in\{0,2\},\quad \forall k=1,2,\dotsc\}\,.
\end{align*}
In contrast to BF model, the ground state in DB model in general cannot be
identified with any one particular state of the Markov chain $\{\zeta_n\}$.

\paragraph{Continuous Time Version.}
So far we have formulated the discrete time dynamics of the system of
bits. It is also sensible to consider continuous-time versions of
both BF and DB models. Let $\zeta_t=(\zeta^1_t,\zeta^2_t,\dotsc)$ be a
sequence of continuous-time Markov jump processes, each with the state space
$\{0,1\}$ in the BF case, and with $\{0,1,2\}$ in the DB case. The
$k$th process $\{\zeta^k_t\}_{t\geq 0}$ represents the corresponding change of
states of the $k$th bit which happens with exponentially distributed
holding times at rate $p_k$. Note that since all $p_k$
sum up to 1, there is an a.s.\ finite number of state changes of the
whole system of bits in any finite period of time. Therefore one can
define the renewal process $\{t_n\}$ of times when some of the bits
changes its state. The embedded Markov chain $\{\zeta_{t_n}\}_{n\geq 0}$
is then a distributional copy of the discrete-time
version $\{\zeta_n\}_{n\geq 0}$ of the model. One of the advantages
of this representation, also known as \emph{Poissonisation} and widely used
since at least \cite{AthKar:68}, is the independence of $\zeta_t^k$
for different $k=1,2,\dotsc$. This often leads to explicitly
computable probabilities as we also demonstrate here.  Further we use
the discrete time and the continuous time versions of the models
interchangeably, whichever is more convenient at the moment: the
notion of recurrence/transience stays the same for both.
  
The Markov chains \eqref{eq:bfmc} and \eqref{eq:dbmc} describing our
models can be regarded as random walks on an infinite-dimensional
group, \seg~\cite{LyoPemPer:96}. Typically the analysis of random
walks on discrete groups assumes a finite generator set, so that the
underlying Cayley graph is locally finite, as for example,
in~\cite{Woe:00}. However, the state spaces in our models are not
finitely generated groups, so analysis of a random walk in such a
space is interesting in its own right. But practical applications are
also envisaged: in addition to an evident relation to modelling
reliability of a complex system with multiple components prone to fail
at different rates, one can also mention computer science and
information encryption techniques. The very term ``Bit Flipping'' is
borrowed from the literature on randomised simplex algorithms
\cite{BalPem:07}, where a similar model was analysed: each flipped bit
there makes all of the bits to the right change their states as
well. This model applies to estimate the running speed of a random
edge simplex algorithm on a Klee-Minty cube which is particularly
`bad' for many optimisation algorithms thus providing a worst-case
scenario, \seg \cite{DezNemTer:08} and the references therein.

Finally, we mention and interesting interpretation of the BF model as
a dynamical percolation process on $\Z$, where we start with all edges
'open', and then they start 'closing' independently of each other,
each with different rate. The question of recurrence is then
equivalent to the question of existence of a sequence of percolation
times when all the edges are open and thus 0 is connected to the
infinity. For a recent survey on the dynamical percolation, see
\cite{steif2009survey}.

\section{Main Results}

For the above models we prove the following main result: each model
exhibits a transient or recurrent behaviour, depending on how fast
$p_k$'s decay. There is a critical decay separating both regimes,
different for each model. We start characterising the critical
decay in the Binary Flipping model.

\begin{thm}\label{th:bf-gen-suf}
  If the distribution $\ppp$ is such that:
  \begin{enumerate}
  \item[(i)] $\limsup\limits_{k\ti}2^kp_k <\infty$, then BF model is
  recurrent, \ie $\P(\tau_{\mathrm{BF}}<\infty)=1$,\\
  \item[(ii)] $\liminf\limits_{k\ti}(2-\eps)^kp_k >0$ for
  some $\eps>0$, then BF model is
  transient, \ie $\P(\tau_{\mathrm{BF}}=\infty)>0$.
  \end{enumerate}
\end{thm}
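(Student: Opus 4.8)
The plan is to pass to the continuous-time (Poissonised) version and reduce the recurrence question to the convergence of a single integral. Recall that the ground state $\vec 0$ is recurrent for the discrete chain precisely when $\sum_{n\ge 0} u_n = \infty$, where $u_n = \P(\zeta_n = \vec 0)$, and that $\P(\tau_{\mathrm{BF}}<\infty)=1$ is equivalent to this recurrence. In the Poissonised model the $k$-th bit is flipped by an independent Poisson clock of rate $p_k$, so $\zeta^k_t = 0$ iff the number $N_k(t)\sim\mathrm{Poisson}(p_k t)$ of flips up to time $t$ is even. Using $\P(N_k(t)\text{ even}) = (1+e^{-2p_k t})/2$ and independence across $k$,
\begin{equation*}
  g(t) := \P(\zeta_t = \vec 0) = \prod_{k=1}^\infty \frac{1+e^{-2p_k t}}{2}\,.
\end{equation*}
Since the total jump process is Poisson of rate $1$ and is independent of the embedded chain, one has $g(t) = \sum_{n\ge0} e^{-t}\frac{t^n}{n!}\,u_n$, whence $\int_0^\infty g(t)\,dt = \sum_{n\ge0} u_n$. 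Thus BF is recurrent iff $\int_0^\infty g(t)\,dt = \infty$, and the whole problem reduces to estimating the tail of $g$.

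To this end I would write $-\log g(t) = \sum_k f(p_k t)$ with $f(x) = \log 2 - \log(1+e^{-2x})$. Here $f$ increases from $f(0)=0$ to $f(\infty)=\log 2$, and a short computation gives $f(x)\le \min\{x,\log 2\}$. The heuristic is that bit $k$ ``switches on'' near $t\approx 1/p_k$, contributing almost $\log 2$ to $-\log g$ once $t\gg 1/p_k$; so $-\log g(t)$ grows like $\log 2$ times the number of bits with $p_k\gtrsim 1/t$, and the geometric decay rate of $p_k$ fixes this count as a multiple of $\log t$. For part (i), use $p_k \le C2^{-k}$ together with $f(x)\le\min\{x,\log2\}$ and monotonicity of $\min$, and split the sum at $k_0 \approx \log_2(Ct/\log2)$: the head contributes at most $k_0\log 2$, while the geometric tail $\sum_{k>k_0}C2^{-k}t = O(1)$. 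This yields $-\log g(t)\le \log(Ct)+O(1)$, i.e.\ $g(t)\ge c/t$; hence $\int^\infty g = \infty$ and the model is recurrent.

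For part (ii) I would use the lower bound $p_k\ge c(2-\eps)^{-k}$. Fix a large threshold $x_0$ with $f(x_0)\ge(1-\delta)\log 2$; then every bit with $p_k t\ge x_0$ contributes at least $(1-\delta)\log2$ to $-\log g(t)$. Since $p_k\ge c(2-\eps)^{-k}$ and the $p_k$ decrease, at least about $\log_{2-\eps}(ct/x_0)$ bits satisfy $p_k t\ge x_0$, so
\begin{equation*}
  -\log g(t)\ \ge\ (1-\delta)\log 2\cdot\frac{\log(ct/x_0)}{\log(2-\eps)} - O(1)\,.
\end{equation*}
Because $\log2/\log(2-\eps)>1$, choosing $\delta$ small makes the prefactor $\beta := (1-\delta)\log2/\log(2-\eps)$ exceed $1$; hence $g(t)\le C' t^{-\beta}$ with $\beta>1$, so $\int^\infty g<\infty$ and the model is transient.

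The main obstacle is that this is a critical threshold, so both bounds must be essentially tight. In particular, in part (ii) the crude estimate that an active bit contributes merely $\tfrac12\log2$ is \emph{not} enough to cross the borderline $\int t^{-1}$: one must exploit that each active bit contributes nearly the full $\log2$ (by taking $x_0$ large) in conjunction with the strict inequality $\log2/\log(2-\eps)>1$. The careful logarithmic bookkeeping of ``how many bits are on at time $t$'' near the geometric critical rate $2^{-k}$ is the crux of the argument; the reduction of recurrence to $\int_0^\infty g(t)\,dt$ and the explicit product formula for $g$ do the rest of the work.
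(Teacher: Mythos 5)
Your proof is correct and follows essentially the same route as the paper: Poissonisation, reduction of recurrence to the divergence of $\int_0^\infty \prod_k(1+e^{-2p_kt})/2\,dt$, and a logarithmic count of the factors that are bounded away from $1$ at time $t$. The only difference is bookkeeping: the paper first proves two-sided power-law bounds for exactly geometric $\ppp$ and then handles general $\ppp$ by monotone comparison, whereas you bound $-\log$ of the product directly from the hypotheses; both give the same exponents $\log 2/\log(1/p)$ and $\log(2-\eps)/\log(1/p)$ on either side of the threshold.
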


Loosely speaking, the critical decay of $p_k$'s in BF model is the geometric
distribution with parameter $\frac{1}{2}$. Although deterministic systems may
behave rather differently from stochastic ones, often in a non-critical regime
they provide a good intuition to what is happening. Imagine an infinite row of
lamps, turning on and off with deterministic frequencies $p_k=(1/2)^{k-1}$.
That means, the first lamp changes its state every second, the second lamp
every 2 seconds, the third every 4 seconds, etc., meaning that this row is
nothing else than a digital clock showing the time since the start in a binary
format. Then at least one lamp is lit at every positive time instant. This is
still true when $p_k=p^k$ with $p>1/2$: the $(n+1)$th lamp will always turn on
before the $n$th turns off, so the active intervals of $n$th and $(n+1)$th
lamps will overlap for every $n$. Thus this deterministic system never returns
to the ground state whenever $p\geq 1/2$. However, for $p<1/2$ the first $n$
lamps will have time to run through all possible combinations (including all
zeroes) before the $(n+1)$th lamp will be turned on, so there will always be
an infinite number of occurrences of the ground state when no lamp is lit. As
Theorem~\ref{th:bf-gen-suf} shows, the same critical decay separates the
stochastic BF model too.

Furthermore, the BF model
is never positive recurrent, as the next theorem shows.
\begin{thm}
  \label{th:null-rec}
  When a BF model is recurrent, it is null-recurrent, \ie
  $\E\tau_{\mathrm{BF}}=\infty$ always.
\end{thm}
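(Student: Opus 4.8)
The plan is to exploit the group structure noted after \eqref{eq:dbmc}: the chain $\{\zeta_n\}$ is a \emph{random walk} on the countable abelian group $G=\bigoplus_{k\ge1}(\Z/2\Z)$ of binary sequences with finitely many ones under coordinatewise addition modulo $2$, with step distribution $\mu(e_k)=p_k$, where $e_k$ denotes the $k$th unit vector. Indeed, \eqref{eq:bfmc} says precisely that $\zeta_{n+1}=\zeta_n+e_{\chi_{n+1}}$, so the one-step kernel is translation invariant, $P(x,y)=\mu(y-x)$. Since the support of $\ppp$ is unbounded, the set $\{k:p_k>0\}$ is infinite, and the communicating class of $\vec0$ is the subgroup $G'=\langle e_k:p_k>0\rangle$: every element of $G'$ is reached from $\vec0$ in finitely many positive-probability steps and, each generator being its own inverse, $\vec0$ is reached back likewise. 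Note that $G'$ is again an infinite group. We restrict the chain to $G'$, on which it is irreducible, without changing $\tau_{\mathrm{BF}}$.

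First I would check that the counting measure $\pi\equiv1$ on $G'$ is stationary. By translation invariance,
\[
  (\pi P)(y)=\sum_{x\in G'}\pi(x)\,\mu(y-x)=\sum_{z\in G'}\mu(z)=1=\pi(y),\qquad y\in G',
\]
so $\pi$ is an invariant measure of infinite total mass.

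Next I would invoke the standard theory of irreducible recurrent Markov chains on a countable state space (\seg \cite{Woe:00}): such a chain admits an invariant measure that is unique up to a positive multiplicative constant, and it is positive recurrent if and only if this measure is finite; equivalently, the mean return time $\E\tau_{\mathrm{BF}}$ to the starting state is finite if and only if the chain is positive recurrent. Assuming the BF model is recurrent, its invariant measure must be proportional to the counting measure $\pi\equiv1$ found above; but $\pi$ has infinite total mass because $G'$ is infinite. Hence the chain is not positive recurrent, i.e.\ it is null recurrent, which is exactly the assertion $\E\tau_{\mathrm{BF}}=\infty$.

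The only genuinely delicate point is the bookkeeping around possible zeros in $\ppp$: one must pass to the communicating class $G'$ and confirm that it is an infinite group, so that the counting measure is non-summable. Everything else is the textbook correspondence between finiteness of the invariant measure and positive recurrence, and no estimate on the tail of $\tau_{\mathrm{BF}}$ is required: the conclusion is purely structural and reflects the infiniteness of the group on which the walk lives.
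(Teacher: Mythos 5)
Your argument is correct, but it takes a genuinely different route from the paper's. You are in effect formalising the heuristic that the paper itself sketches immediately after the statement of the theorem: the chain is a translation-invariant random walk on the infinite abelian group $\bigoplus_{k\geq 1}\Z/2\Z$, the counting measure is invariant, and for an irreducible recurrent chain on a countable state space the invariant measure is unique up to a positive scalar and has finite total mass exactly in the positive recurrent case, so an infinite group forces null recurrence. All the steps you invoke (invariance of the counting measure, uniqueness of the invariant measure under recurrence, the equivalence of positive recurrence with finiteness of that measure) are standard and correctly applied; note only that your bookkeeping around zeros of $\ppp$ is vacuous here, since $p_1\geq p_2\geq\dotsb$ together with the assumed unboundedness of the support forces every $p_k$ to be strictly positive, so $G'=G$. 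The paper instead avoids the general theory of infinite invariant measures entirely: it projects onto the first $m$ coordinates, observes that the truncated chain on $\{0,1\}^m$ is a finite, irreducible, aperiodic chain with a symmetric transition matrix, hence has uniform stationary distribution, and applies Kac's formula to get $\E\tau_{\BF}^{\wedge m}=2^m$; since $\tau_{\BF}\geq\tau_{\BF}^{\wedge m}$ almost surely, $\E\tau_{\BF}\geq 2^m$ for every $m$. The paper's version is more elementary (only finite-state Markov chain theory is used), produces an explicit divergent family of lower bounds, and does not actually need the recurrence hypothesis at all; yours is shorter once the uniqueness theorem for invariant measures of recurrent chains is granted, and it exposes the structural reason for the result, namely that the walk lives on an infinite group on which no finite invariant measure can exist.
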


This result can be easily foreseen by regarding the BF process as an
irreducible time-reversible Markov chain (\ref{eq:bfmc}). The
time-reversibility implies that the stationary measure is uniform, but
the state space is countably infinite, hence it cannot be
probabilistic so the chain cannot be positive recurrent.

Although the first moment of $\tau_{\mathrm{BF}}$ is infinite, it is reasonable to
ask for which values of $r<1$ the $r$th moment becomes finite. The next
theorem presents bounds for such $r$ in the case of asymptotically geometrically
decaying $\{p_k\}$, these are presented graphically on Figure~\ref{fig:bounds}.

\begin{thm}\label{th:bf-moments}
  Consider the recurrent BF model in discrete time with
  $p_k\sim C_1 p^k$ for some fixed constant $C_1>0$ and $p\in (0, 1/2)$.  Then
  \begin{itemize}
  \item[(i)] $\E\tau_\BF^r<\infty$ for any positive $r<1-\frac{\log 2}{\log(1/p)}$.
    Moreover, for any such $r$, if the Markov chain~(\ref{eq:bfmc}) is
    started from an arbitrary $\zeta_0$ with the largest active bit $M_0$,
    then there exists a constant $C_2=C_2(C_1,p,r)$ such that
    \begin{displaymath}
      \E\left[\tau_\BF^r | M_0=m\right] \leq C_2\left(\frac{1}{2p}\right)^m;
    \end{displaymath}
  \item[(ii)] $\E\tau_\BF^r=\infty$ for any $r>1-\frac{\log(2-p)}{\log(1/p)}$.
  \end{itemize}
\end{thm}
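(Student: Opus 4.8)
The plan is to pass to the continuous-time (Poissonised) version of the BF model, where the bits $\{\zeta^k_t\}$ evolve as \emph{independent} two-state chains, and to reduce the whole question to one scalar function. The discrete and continuous return times are comparable: the continuous $\tau$ is a sum of $\tau_\BF$ i.i.d.\ $\mathrm{Exp}(1)$ holding times, and conditionally on $\tau_\BF=n$ it is $\mathrm{Gamma}(n,1)$, so $\E[\tau^r\mid\tau_\BF=n]\asymp n^r$ and hence $\E\tau^r$ and $\E\tau_\BF^r$ are finite together and of the same order. Everything then rests on the return density
\[ p_{00}(t)=\P\bigl(\zeta_t=\vec0\mid\zeta_0=\vec0\bigr)=\prod_{k\ge1}\frac{1+e^{-2p_kt}}{2}, \]
together with its starting-state analogue: if $x$ has active-bit set $A$, independence of the bits gives
\[ \P\bigl(\zeta_t=\vec0\mid\zeta_0=x\bigr)=p_{00}(t)\prod_{k\in A}\tanh(p_kt). \]

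The engine is a two-sided estimate $c_1t^{-\alpha}\le p_{00}(t)\le c_2t^{-\alpha}$ for large $t$, with $\alpha=\log 2/\log(1/p)$. For the lower bound I would keep the $\approx\log t/\log(1/p)$ factors with $2p_kt\ge1$ (each $\ge\tfrac12$) and bound the remaining product below by $\exp(-t\sum_{k>K}C_1p^k)=\Theta(1)$. For the upper bound I would write $-\log p_{00}(t)=\sum_k\log\frac{2}{1+e^{-2p_kt}}$ and control each deficit from $\log2$ by $\log(1+e^{-2p_kt})\le e^{-2p_kt}$, whose sum over the indices with $2p_kt\ge1$ is $O(1)$ because the arguments grow geometrically. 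Since $p<1/2$ forces $\alpha<1$, the Laplace transform $G_{00}(\lambda)=\int_0^\infty e^{-\lambda t}p_{00}(t)\,dt\asymp\lambda^{\alpha-1}$ as $\lambda\to0$, and the renewal (first-entrance) decomposition at $\vec0$, whose exit rate is $\sum_kp_k=1$, yields
\[ 1-\E e^{-\lambda\tau}=\frac{1}{(\lambda+1)\,G_{00}(\lambda)}\asymp\lambda^{1-\alpha}\qquad(\lambda\to0). \]

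To read off the moments I would use, for $0<r<1$, the representation $\E\tau^r=\frac{r}{\Gamma(1-r)}\int_0^\infty\lambda^{-r-1}\bigl(1-\E e^{-\lambda\tau}\bigr)\,d\lambda$, whose only delicate endpoint is $\lambda\to0$, where the integrand is $\asymp\lambda^{-r-\alpha}$; this is integrable exactly when $r<1-\alpha=1-\log2/\log(1/p)$, giving finiteness in (i). For the quantitative bound I would start from $x$ with largest active bit $m$ and use $\E_x e^{-\lambda\tau}=G_{x0}(\lambda)/G_{00}(\lambda)$, where $G_{x0}(\lambda)=\int_0^\infty e^{-\lambda t}p_{00}(t)\prod_{k\in A}\tanh(p_kt)\,dt$. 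Since $1-\prod_{k\in A}\tanh(p_kt)$ lies in $[0,1]$ and is negligible once $t\gg p_m^{-1}$, the numerator of $1-\E_x e^{-\lambda\tau}$ is governed by the window $t\lesssim p_m^{-1}\asymp(1/p)^m$; feeding $p_{00}(t)\asymp t^{-\alpha}$ through the integral gives $\E_x\tau^r\asymp p_m^{-r}\asymp(1/p)^{rm}$, and $(1/p)^{rm}\le(1/(2p))^m$ precisely when $r\le1-\log2/\log(1/p)$, which is the claimed $C_2(1/(2p))^m$. Summing over the first activated bit then recovers finiteness directly: $\E\tau_\BF^r\le\sum_kp_k\,\E[\tau^r\mid M_0=k]\lesssim\sum_kC_1p^k(1/(2p))^k=C_1\sum_k2^{-k}<\infty$.

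For part (ii) the same estimate $1-\E e^{-\lambda\tau}\gtrsim\lambda^{1-\alpha}$ makes the integrand $\gtrsim\lambda^{-r-\alpha}$ near $0$, so $\E\tau^r=\infty$ as soon as $r>1-\alpha$; since $1-\log(2-p)/\log(1/p)>1-\alpha$, this a fortiori covers the stated range. I expect the \textbf{main obstacle} to be the precise two-sided control of the infinite product $p_{00}(t)$: the exponent $\alpha$ is accompanied by a bounded but genuinely log-periodic fluctuation, and the hypothesis $p_k\sim C_1p^k$ is only asymptotic, so making the deficit and tail sums rigorous uniformly in $t$ and transferring them through the Laplace transform is the technical heart. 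The discrepancy between the two bounds is exactly the price of replacing the sharp accounting of `mixed' factors (each worth $\log2$) by a cruder threshold count crediting only $\log(2-p)$ per factor; pushing (ii) down to the sharp threshold $1-\log2/\log(1/p)$ is the step I would expect to be hardest, with the secondary difficulty being to track the $p_m$-dependence through $G_{x0}$ cleanly enough to extract the explicit constant $1/(2p)$.
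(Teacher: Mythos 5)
Your proposal is correct in outline but takes a genuinely different route from the paper. The paper works entirely in discrete time with Lyapunov-type drift criteria for passage-time moments (Theorems 1 and Corollary 1 of Aspandiiarov et al.), applied to $Y_n^{2r}=y^{M_n}$ where $M_n$ is the rightmost active bit; the technical heart there is a coupling lemma showing that the bits with index $\geq K$ are stochastically dominated by i.i.d.\ $\Bern(1/2)$ variables, which is needed to control the downward jump of $M_n$ when the rightmost bit is switched off, followed by an excursion decomposition to pass from $\tau_M$ to $\tau_0$. You instead exploit Poissonisation, the resulting independence of the bits, the explicit return probability $p_{00}(t)=\prod_k(1+e^{-2p_kt})/2$, the renewal identity $1-\E e^{-\lambda\tau}=((1+\lambda)G_{00}(\lambda))^{-1}$, and the fractional-moment integral representation. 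Your key estimates check out: the refined upper bound $-\log p_{00}(t)\geq N(t)\log 2-\sum_{k:2p_kt\geq 1}e^{-2p_kt}$ with the deficit sum $O(1)$ (the arguments grow geometrically) does give the two-sided bound $p_{00}(t)\asymp t^{-\alpha}$ with $\alpha=\log 2/\log(1/p)$, which is sharper than the $t^{-\log(2-\eps)/\log(1/p)}$ upper bound the paper itself derives for the recurrence theorem. As a consequence your argument for (ii) actually proves more than the statement: $\E\tau_\BF^r=\infty$ for every $r\geq 1-\log 2/\log(1/p)$, i.e.\ it closes the gap between (i) and (ii) displayed in Figure~\ref{fig:bounds}, whereas the paper's submartingale criterion only reaches $1-\log(2-p)/\log(1/p)$. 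Two small points of care: first, in the conditional bound your estimate $1-\prod_{k\in A}\tanh(p_kt)\leq\sum_{k\leq m}2e^{-2p_kt}\leq 2me^{-2p_mt}$ introduces a polynomial (or polylogarithmic, after optimising the crossover $\lambda_*$) factor in $m$, so you get $\E[\tau^r\cond M_0=m]\lesssim m^{c}(1/p)^{rm}$ rather than exactly $(1/p)^{rm}$; this is harmless because $r$ is strictly below $1-\alpha$ and the factor is absorbed into $(1/(2p))^m=(1/p)^{(1-\alpha)m}$, but it should be said. Second, the transfer between $\E\tau^r$ and $\E\tau_\BF^r$ uses that the holding times are $\mathrm{Exp}(1)$ independently of the jump chain, which holds here only because every state has total jump rate $\sum_kp_k=1$ — worth stating explicitly. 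With those details filled in, your approach is a valid and in fact stronger alternative; its price is that it leans on the exact product structure and reversibility at the single state $\vec{0}$, while the paper's drift method is the one that survives perturbations such as the coupling with the DB model.
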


\begin{figure}[ht]
  \centering
  \input{Fig1_t}
  \caption{Integrability of $\tau^r_{\mathrm{BF}}$ as
    given by Theorem~\ref{th:bf-moments}. 
    \label{fig:bounds}}
\end{figure}

\begin{rem}
  There is an obvious coupling of the DB model with the BF model: just declare
  the bits which flipped more than once in BF model damaged in DB. Then
  $\tau_\DB\leq\tau_\BF$ almost surely and the same upper bound \emph{(i)} of
  Theorem~\ref{th:bf-moments} is also true for $\tau_\DB$.
\end{rem}

The DB model can also be recurrent or transient, depending on $p_k$.
The recurrence/transience of the model now does not correspond to recurrence/transience
of the Markov chain~\eqref{eq:dbmc}, because the ground state
of the DB model is an infinite collection of states of $\{\zeta_n\}$. Still, we call
the DB model recurrent, if $\tau_{\DB}<\infty$ with probability 1, and
transient otherwise. Denote by $Q_k$ the tail of the distribution $\ppp$:
\begin{displaymath}
Q_k = \sum_{j=k+1}^\infty p_k.
\end{displaymath}

\begin{thm}\label{th:db-gen-exp}
  If the distribution $\ppp$ is such that:
  \begin{enumerate}[(i)]
  \item $\limsup\limits_{k\to\infty} \frac{Q_{k+1}}{Q_k} = p < 1$,
  then the DB model is recurrent,
  \item $p_k \sim C\exp(-\alpha k^\gamma), k\to\infty$ for
  some $\alpha>0, \gamma\in(0,1/2)$, then the DB model is transient.
  \end{enumerate}
\end{thm}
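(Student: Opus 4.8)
The plan is to work in the Poissonised (continuous-time) version of the model, where the bits evolve independently. There the $k$th bit is active at time $t$ exactly when its clock has rung precisely once, an event of probability $q_k(t)=p_k t\,e^{-p_k t}$, and the ground state is visited at time $t$ iff $\eta_t=0$. By independence,
$g(t):=\Prob(\eta_t=0)=\prod_{k\ge 1}\bigl(1-q_k(t)\bigr)$, and the same product (with $t$ the number of steps) controls the discrete counts up to constants. Recurrence is equivalent to $\eta_t=0$ for arbitrarily large $t$ almost surely, while for transience it suffices to show that $\{t:\eta_t=0\}$ is almost surely bounded and then promote this to $\Prob(\tau_\DB=\infty)>0$ by a regeneration argument: at any return to the ground state only finitely many bits have been touched and are damaged, the remaining cofinitely many bits are idle and carry the same tail $Q_k$, so the post-return model is again of the same type; since damaging a bit only lowers $\eta_t$ pathwise, the all-idle start maximises the no-return probability. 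Hence finitely many returns almost surely forces a strictly positive no-return probability.

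For part (i) I would first show that geometric decay of the tail forces the mean number of active bits $m(t)=\sum_k q_k(t)$ to stay bounded, $\sup_t m(t)<\infty$. Using $Q_{k-1}=O(p_k)$ one sees that the mass with $p_k\le 1/t$ contributes $t\,Q_{k_0}=O(1)$, while the terms with $p_k>1/t$ decay doubly-exponentially away from the peak $p_kt\approx 1$. As every factor obeys $q_k(t)\le e^{-1}$, this gives $g(t)=\prod_k(1-q_k(t))\ge \exp(-c\,m(t))\ge g_\ast>0$ uniformly in $t$. To upgrade this uniform lower bound to recurrence I would exploit scale separation: a bit matters only near time $1/p_k$, so along a sufficiently sparse sequence of scales $t_j$ the bands $B_j=\{k:p_kt_j\approx 1\}$ are disjoint. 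Letting $H_j$ be the event that no bit of $B_j$ is active at $t_j$, the $H_j$ are independent with $\Prob(H_j)\ge g_\ast$ and hence occur infinitely often; choosing the bands wide enough that the outside mass $\eps_j=\sum_{k\notin B_j}q_k(t_j)$ is summable, Borel--Cantelli forces $\{\text{some outside bit active at }t_j\}$ to occur only finitely often, so $\eta_{t_j}=0$ infinitely often and the model is recurrent.

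For part (ii) the same product is used for an upper bound. When $p_k\sim C\exp(-\alpha k^\gamma)$ the number of indices in the peak band $\{k:q_k(n)\ge\delta\}$, i.e.\ those with $p_k\in[a/n,b/n]$, is of order $(\log n)^{1/\gamma-1}$, since the slope of $k\mapsto\alpha k^\gamma$ at the peak is $\sim(\log n)^{1-1/\gamma}$. Keeping only these factors yields $g_n=\Prob(\eta_n=0)\le(1-\delta)^{|B_n|}\le\exp\!\bigl(-c(\log n)^{1/\gamma-1}\bigr)$. The decisive fact is that $\sum_n\exp(-c(\log n)^{\beta})$ converges when $\beta>1$, and with $\beta=1/\gamma-1$ this is precisely $\gamma<1/2$. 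Thus $\sum_n g_n<\infty$, Borel--Cantelli gives $\eta_n\ne 0$ for all large $n$ almost surely (finitely many returns), and the regeneration and monotonicity principle of the first paragraph promotes this to $\Prob(\tau_\DB=\infty)>0$, establishing transience.

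I expect the main obstacle in (i) to be the uniform control $\sup_t m(t)<\infty$ combined with the bookkeeping that keeps the bands $B_j$ simultaneously disjoint (needed for independence) and wide enough that the leakage $\sum_j\eps_j$ is finite; these two demands pull against each other and force a super-geometric choice of scales $t_j$. In (ii) the delicate points are the two-sided estimate $|B_n|\asymp(\log n)^{1/\gamma-1}$, which is exactly where the threshold $\gamma=\tfrac12$ appears, and making the regeneration step rigorous, since the ground state of the DB model is not a single state of the chain. I would also need to verify carefully that the discrete-time probabilities $\Prob(\eta_n=0)$ and their Poissonised analogues $g(n)$ agree up to constants, so that a Borel--Cantelli argument over integer $n$ genuinely controls all returns.
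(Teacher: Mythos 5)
Your part (ii) follows the paper's route almost exactly: the band count $\card\{k:l_1\le p_kt\le l_2\}\asymp(\log t)^{1/\gamma-1}$ and the threshold $1/\gamma-1>1$ are precisely display \eqref{eq:cardbound}. One step needs repair, though. In continuous time, $\sum_n\Prob(\eta_n=0)<\infty$ over \emph{integer} $n$ only excludes returns at integer times, and the ground state can be entered and left between consecutive integers; in discrete time the product formula for $\Prob(\eta_n=0)$ is unavailable because the bits are dependent, which is the de-Poissonisation worry you flag but do not resolve. The paper's fix is to bound the expected \emph{Lebesgue measure} of $\{t:\eta_t=0\}$ by $\int_0^\infty\prod_k(1-p_kte^{-p_kt})\,dt<\infty$ (same estimate, integrated rather than summed); since each visit to the ground state contributes an independent $\mathrm{Exp}(1)$ holding time, infinitely many visits would make this measure infinite a.s. Your regeneration/monotonicity step (damaged bits only decrease $\eta_t$ pathwise, so the all-idle start is extremal) is correct and genuinely needed to pass from ``finitely many visits a.s.''\ to $\Prob(\tau_\DB=\infty)>0$; the paper leaves it implicit.

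Part (i) is a genuinely different route. The paper never estimates $\Prob(\eta_t=0)$ at fixed times: it introduces the stopping times $\taugre$ (first ring of a clock with index beyond $k$) and the events $B_k$ that, once bit $k$ becomes the rightmost ever touched, each of the bits $1,\dotsc,k$ rings at least twice before any bit beyond $k$ rings; these are independent across $k$ by the strong Markov property, $\Prob(B_k)$ is bounded below via $p_j/Q_k\geq C p^{j-k}$, and the second Borel--Cantelli lemma yields infinitely many guaranteed returns. Your alternative --- a uniform bound $\sup_t\E\eta_t<\infty$ giving $\inf_t\Prob(\eta_t=0)>0$, then independence of disjoint frequency bands at super-geometrically spaced deterministic times plus a first Borel--Cantelli control of the leakage $\eps_j$ --- is sound: the hypothesis gives $Q_k\leq Cp_k$, hence $O(1)$ indices per dyadic band, so $\E\eta_t=O(1)$ uniformly and $\eps_j$ can be made summable while keeping the bands disjoint. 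The paper's construction buys returns at random times without any band bookkeeping; yours isolates the reusable principle that a bounded mean number of active bits together with multiscale independence forces recurrence.
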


Denote by $\n_t$ the total number of active bits in the continuous version
of the model at time $t\geq 0$. In both
BF and DB models, whenever $\E\n_t\to \infty,$
conditions of the Central
Limit Theorem are fulfilled for $\n_t$. We prove the following fact:
\begin{thm}\label{th:bs-db-clt}
  For both BF and DB models, whenever 
  \begin{equation}\label{eq:infexp}
    \E\n_t\to \infty,
  \end{equation} then also
  $\var \n_t\to\infty$ as $t\to\infty$ and
  \begin{displaymath}
    \frac{\n_t-\E\n_t}{\sqrt{\var\n_t}} \td \mathcal{N}(0,1)\quad\text{as}\ t\to\infty.
  \end{displaymath}
  In BF model the condition~\eqref{eq:infexp} is always fulfilled,
  and in DB model a sufficient condition for~\eqref{eq:infexp} is:
  \begin{equation}\label{eq:sufdbinfexp}
    p_k \sim C \exp(-\alpha k^\gamma), k\to \infty,
  \end{equation}
  for some constants $C>0, \alpha>0, \gamma \in (0,1)$.
\end{thm}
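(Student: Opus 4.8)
The plan is to exploit the Poissonisation described in the introduction: in the continuous-time version the coordinate processes $\{\zeta^k_t\}_{k\geq 1}$ are independent, so that $\n_t=\sum_{k\geq 1}\xi_k(t)$ is a sum of \emph{independent} indicators $\xi_k(t)=\one\{\zeta^k_t=1\}$ with $q_k(t):=\P(\zeta^k_t=1)$. First I would compute these marginals explicitly. In the BF model the $k$th bit is a symmetric two-state Markov chain flipping at rate $p_k$, so starting from $0$ one gets $q_k(t)=\tfrac12(1-e^{-2p_kt})$. In the DB model the $k$th bit performs the two jumps $0\to1\to2$, each at rate $p_k$, so being in state $1$ at time $t$ means exactly one jump of a rate-$p_k$ Poisson process has occurred, giving $q_k(t)=p_kt\,e^{-p_kt}$. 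The point of these formulas is that in both cases $q_k(t)$ is bounded away from $1$: we have $q_k(t)\leq\tfrac12$ in BF and $q_k(t)\leq e^{-1}$ in DB, the latter since $\max_{x\geq0}xe^{-x}=e^{-1}$.

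The variance bound is then immediate. Writing $\sigma^2(t):=\var\n_t=\sum_k q_k(t)\bigl(1-q_k(t)\bigr)$ and using $1-q_k(t)\geq c_0$ with $c_0=\tfrac12$ (BF) or $c_0=1-e^{-1}$ (DB), I obtain $\var\n_t\geq c_0\sum_k q_k(t)=c_0\,\E\n_t$. Hence \eqref{eq:infexp} forces $\var\n_t\to\infty$, which is the first assertion of the theorem.

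For the CLT I would invoke the Lindeberg--Feller theorem for the triangular array $\{\xi_k(t)\}_k$. The crucial simplification is that the centred summands satisfy $|\xi_k(t)-q_k(t)|\leq1$ uniformly. Therefore, for any fixed $\eps>0$, once $t$ is large enough that $\eps\,\sigma(t)>1$ (which happens because $\sigma(t)\to\infty$), every truncation indicator $\one\{|\xi_k(t)-q_k(t)|>\eps\sigma(t)\}$ vanishes, so the Lindeberg sum is identically $0$. The Lindeberg condition thus holds trivially, and $(\n_t-\E\n_t)/\sigma(t)\td\mathcal N(0,1)$.

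It remains to verify condition \eqref{eq:infexp} in each model. For BF this is automatic: for every fixed $N$, $\E\n_t\geq\sum_{k=1}^N\tfrac12(1-e^{-2p_kt})\to N/2$ as $t\ti$, and since the support of $\ppp$ is unbounded this gives $\E\n_t\ti$. The DB case is the one genuine calculation, and I expect it to be the main obstacle. Under $p_k\sim Ce^{-\alpha k^\gamma}$ with $\gamma\in(0,1)$ I would localise the sum $\E\n_t=\sum_k p_kt\,e^{-p_kt}$ around the index $k_*(t)$ solving $p_kt\approx1$, i.e.\ $k_*(t)=\bigl(\alpha^{-1}\log(Ct)\bigr)^{1/\gamma}$. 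On a window $|k-k_*(t)|\leq c\,k_*(t)^{1-\gamma}$ a first-order expansion of $k^\gamma$ shows $p_kt$ stays between two positive constants, so each term $p_kt\,e^{-p_kt}$ is bounded below by a positive constant there; since the window contains $\asymp k_*(t)^{1-\gamma}$ integers and $(1-\gamma)/\gamma>0$, we conclude $\E\n_t\gtrsim k_*(t)^{1-\gamma}\ti$. Making the window estimate and the counting rigorous (in particular controlling the $o(1)$ hidden in $p_k\sim Ce^{-\alpha k^\gamma}$) is the only delicate part; everything else follows from the boundedness of the indicators.
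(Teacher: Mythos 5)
Your proposal is correct and follows essentially the same route as the paper: Poissonisation to obtain independent indicator summands with explicit marginals $\tfrac12(1-e^{-2p_kt})$ and $p_kte^{-p_kt}$, the bound $\var\n_t\geq(1-\sup f)\,\E\n_t$ forcing the variance to infinity, a Lindeberg condition that holds trivially because the centred indicators are bounded by $1$ while $\sqrt{\var\n_t}\to\infty$, and the same localisation of $\sum_k p_kt e^{-p_kt}$ around $p_kt\asymp 1$ yielding a count of order $(\log t)^{1/\gamma-1}\to\infty$ in the DB case. The only (immaterial) difference is bookkeeping: the paper reduces to a finite triangular array by lumping the tail $k\geq r_n$ into a single summand whose second moment is made negligible by the choice of $r_n$, whereas you apply the Lindeberg--Feller theorem directly to the countably infinite row of independent summands.
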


\begin{rem}
  In the above theorem, both $\E\n_t$ and $\var\n_t$ admit an explicit
  form of a series:
  \begin{align*}
    \E\n_t & = \sum_{k=1}^\infty f(p_kt), &
    \var\n_t = \sum_{k=1}^\infty f(p_kt)(1-f(p_kt)),
  \end{align*}
  where $f(x)=(1-e^{-x})/2$ for BF and $f(x)=xe^{-x}$ for DB model. In
  both cases,
  $f(p_kt)$ is the probability for the $k$th bit to be active at
  time $t$ in the corresponding model.
\end{rem}

\section{Proofs}

\subsection{Transience and recurrence of BF model}
\label{sec:solution-bs-model}

\begin{proof}[Proof of Theorem~\ref{th:bf-gen-suf}]

  First, we are going to prove the theorem for a particular case of
  $p_k=Cp^{k}$ for some $p\in(0,1)$ and then extend it using monotonicity
  arguments. 
  
  Consider the continuous-time BF model. Recall $\zeta_t=(\zeta^k_t)_{k\geq 1}$,
  a continuous-time
  Markov jump process on $\mathcal{X}$ representing the configuration of
  the bits at time $t\geq 0$, and $\zeta_0=\vec{0}=(0,0,\dots)$,
  see~(\ref{eq:bfmc}).  Denote by $\ttot$ the total time $\{\zeta_t\}$
  spends in the state $\vec{0}$ for $t>0$. Since the process
  $\{\zeta_t\}$ is irreducible, recurrence of the BF model implies that
  the state $\vec{0}$ is recurrent. Since the holding times at state
  $\vec{0}$ are i.i.d.\ exponential with parameter 1, we get
  $\E\ttot=\infty$. When the BF model is transient, i.e.\ when
  \begin{displaymath}
    q=\P\{\zeta_t=\vec{0}\ \text{for some finite } t>t_1\cond
    \zeta_0=\vec{0}\}<1, 
  \end{displaymath}
  where $t_1$ is the time of the first jump of the process $\zeta_t$,
  then $\ttot$ is distributed as the sum $\sum_{i=1}^\nu \epsilon_i$,
  where $\nu$ has geometrical distribution with parameter $q$ and
  $\epsilon_i$'s are i.i.d.\ exponentially distributed with parameter 1 
  r.v.'s representing holding times at state $\vec{0}$. In that
  case, $\E\ttot=\E\nu\,\E\epsilon_i=1/q<\infty$. Thus
  $\E\ttot=\infty$ is equivalent to recurrence of $\zeta(t)$ and of
  the BF model.

One can write
\begin{displaymath}
  \E\ttot  =\E\int\limits_0^\infty\prod_{k=1}^\infty\one\{k\text{th
      bit is idle at time\ }t\}\, dt
  =\int\limits_0^\infty\prod_{k=1}^\infty\P\{\zeta^k_t=0\}\,dt.
\end{displaymath}
Next, 
\begin{align*}
  \P\{\zeta^k_t=0\}&=\sum_{j=0}^\infty\P\{k\text{th bit flipped\ }2j\text{
  times by time\ }t\}\\ &=e^{-p_kt}\sum_{j=0}^\infty \frac{(p_kt)^{2j}}{(2j)!}
  =(1+e^{-2p_kt})/2,
\end{align*}
thus the transience is equivalent to the convergence of the integral
\begin{equation}
  \label{eq:1}
  \E\ttot=\int_0^\infty \prod_{k=1}^\infty (1+e^{-2p_k t})/2\, dt\,.
\end{equation}
In the second part of the proof we provide the lower and upper bounds for the infinite product
under the integral.

Denote by $f(x)=(1-e^{-2x})/2$, so that the product under the integral in
~\eqref{eq:1} becomes $\prod\limits_{k=1}^\infty (1-f(p_kt))$.

Fix an arbitrary small $\eps>0$. Note that the function $1-f(x)$ is monotone decreasing in $x$ and the equation
\begin{displaymath}
  1-f(x)=\frac{1}{2-\eps}
\end{displaymath}
has the only root. Call this root $z_\eps$. Now, represent the product
as a multiplication of the two factors:
\begin{displaymath}
  \prod\limits_{k=1}^\infty (1-f(p_kt)) = \underbrace{\prod\limits_{k:p_kt<z_\eps}
  (1-f(p_kt))}_{\Phi_1(t)} \underbrace{\prod\limits_{k:p_kt\geq z_\eps} (1-f(p_kt))}_{\Phi_2(t)},
\end{displaymath}

First, note that every term of the product $\Phi_1(t)$ is less or equal
than $1$, therefore, $\Phi_1(t)\leq 1$.  Next, observe that
$\{p_k\}_{k\geq 1}$ is the geometric distribution, and therefore
$\Phi_1(t)=\Phi_1(\frac{t}{p^n})$ for each $n=1,2,\dotsc$, moreover,
taking into account that the function $f$ is continuous,
non-increasing, we obtain
\begin{align*}
  \Phi_1(t) &= \prod_{k:p_kt<z_\eps} (1-f(p_kt)) \geq \prod_{k=1}^\infty (1-f(z_\eps p_k))\\
  &=\exp\left\{\sum_{k=1}^\infty \log(1-f(p^k z_\eps))\right\} 
  \geq \exp\left\{-\sum_{k=1}^\infty f(p^k z_\eps)\right\}\\
  &=\exp\left\{-\frac{1}{2}\sum_{k=1}^\infty(1-e^{-2p^k
      z_\eps})\right\} 
  \geq \exp \left\{-\frac{1}{2}\sum_{k=1}^\infty 2p^k z_\eps\right\}\\
  &= \exp\left\{-\frac{z_\eps}{1-p}\right\}
\end{align*}
Therefore for any positive $t$,
$C_1<\Phi_1(t)<C_2$ with fixed and finite  positive constants~$C_1,C_2$.

As for the second factor $\Phi_2(t)$, if we denote $A(t)=\{k:p_kt\geq
z_\eps\}$, then for any $k\in A(t)$ we have $1-f(p_k t)\leq 1/(2-\eps)$, and
thus

\begin{displaymath}
  \Big(\frac{1}{2}\Big)^{|A(t)|}\leq \Phi_2(t) \leq
  \Big(\frac{1}{2-\eps}\Big)^{|A(t)|}.
\end{displaymath}
Since $|A(t)|=\card\{k:p_k\geq\frac{z_\eps}{t}\}=\card\{k:k<\frac{\log
  z_\eps}{\log p} -\frac{\log Ct}{\log p}\}=C_3+\lfloor \frac{\log
  t}{\log \frac{1}{p}}\rfloor$, we obtain
\begin{displaymath}
  C_4 \Big(\frac{1}{2}\Big)^\frac{\log t}{\log \frac{1}{p}}< \Phi_2(t) <
  C_5 \Big(\frac{1}{2-\eps}\Big)^\frac{\log t}{\log\frac{1}{p}},
\end{displaymath}
and finally,
\begin{displaymath}
  C_6 t^{-\frac{\log 2}{\log\frac{1}{p}}} <\prod\limits_{k=1}^\infty
  (1-f(p_k t)) <C_7 t^{-\frac{\log (2-\eps)}{\log\frac{1}{p}}},
\end{displaymath}
which yields the theorem statement for geometric $\{p_k\}$,
recalling an arbitrary small choice of $\eps$.

Moving to a general $\{p_k\}$, in case~(i) for all sufficiently large $k$,
$p_k < C_8 2^{-k} < 2^{C_9-k}$, and since $1-f(x)$ is non-increasing
in $x$, and $1-f(x)>1/2$ for $x>0$, we can choose a large enough $M$
and write
\begin{align*}
  \int_0^\infty \prod\limits_{k=1}^\infty(1-f(p_kt))\, dt&\geq C_{10}
  \int_0^\infty \prod_{k=M}^\infty (1-f(p_k t)) \, dt \\
  &\geq C_{10} \int_0 ^\infty (1-f(2^{C_9-k}t)) \, dt\\
  &= C_{10} \int_0^\infty \prod_{k=1}^\infty
  (1-f(2^{-k}\cdot2^{C_9+M-1}t)) \, \frac{d(2^{C_9+M-1}t)}{2^{C_9+M-1}}\\
  &= C_{11} \int_0^\infty \prod\limits_{k=1}^\infty (1-f(2^{-k}t)) \, dt.
\end{align*}
Similarly, in case~(ii), for all sufficiently large $k$, $p_k > C_{12}
(2-\eps)^{-k} > (2-\eps)^{C_{13}-k}$, 
and $1-f(x) \leq 1, x>0$, so we can choose a sufficiently large $M$ so that
\begin{displaymath}
  \int_0^\infty \prod\limits_{k=1}^\infty(1-f(p_kt)) \, dt \leq
  C_{14}\int_0^\infty \prod\limits_{k=1}^\infty (1-f((2-\eps)^{-k}t)) \, dt,
\end{displaymath}
and the theorem statement follows.
\end{proof}

We have seen that a BF model can be recurrent, but can it be
positive recurrent, i.e.\ can the number of steps to return to the
ground state have a finite expectation? The negative answer is
provided by Theorem~\ref{th:null-rec} which we prove next.

\begin{proof}[Proof of Theorem~\ref{th:null-rec}]\

 Introduce the following notation:
\begin{displaymath}
  \zeta^{\wedge m}_n=(\zeta^1_n,\dotsc,\zeta^m_n),\ \  \vec{0}^{\wedge m}=(
  \underbrace{0,\dotsc,0}_{m}), \ \tau_{\BF}^{\wedge m}=\inf\{n\in \N: \zeta^{\wedge
  m}_n=\vec{0}^{\wedge m}\}
\end{displaymath}
Obviously, $\tau_\BF=\inf\{n\in\N: \zeta_n=\vec{0}\}\geq
\tau_{\BF}^{\wedge m}$ almost surely.
Next, $\{\zeta^{\wedge m}_n\}_{n\geq 0}$ is an irreducible aperiodic (contrary to
$\{\zeta_n\}_{n\geq 0}$, which has period $2$) Markov chain with the
finite state space $\{0,1\}^m$ and a symmetric transition matrix, hence
its unique stationary distribution $\pi^{\wedge m}$, given by the solution of
the detailed balance equations
\begin{displaymath}
  \pi^{\wedge m}(x) p(x,y)= \pi^{\wedge m}(y) p(y,x),\ \  x,y\in\{0,1\}^m,
\end{displaymath}
is uniform on $\{0,1\}^m$.  Consequently, $\pi^{\wedge m}(\vec{0}^{\wedge
  m})=2^{-m}$ and $\E\tau_{\BF}^{\wedge m} = (\pi^{\wedge m}(\vec{0}^{\wedge
  m}))^{-1} = 2^m$.

Finally,
\begin{displaymath}
  \E\tau_\BF\geq\E\tau_{\BF}^{\wedge m} = 2^m\ \text{for every}\ m\in\N,
\end{displaymath}
finishing the proof.
\end{proof}

In order to prove Theorem~\ref{th:bf-moments}, we make use of Theorem~1 and
Corollary~1 in \cite{asp:96}. For convenience of the reader, we give
their formulation in our notation.

\begin{thm}[{\cite[Theorem 1]{asp:96}}]\label{th:asp-96}
  Suppose that $\{Y_n\}_{n\geq 0}$ is an $\{\fff_n\}$-adapted stochastic
  process taking values in an unbounded subset of $\R_+$. Introduce
  $\tau_A=\inf\{n\geq 0:Y_n\leq A\}$.  Suppose there exist
  positive constants $A$, $\eps$ such that for every $n$, $Y_n^{2r}$ is integrable and
  \begin{equation}\label{eq:cond-exp-est}
    Y^{2-2r}_n\E\left[Y_{n+1}^{2r}-Y_n^{2r}\cond \sF_n\right] \leq -\eps 
    \ \ \text{on\ }\{\tau_A\geq n\}.
  \end{equation}
  Then for any $r*$ satisfying $0<r^*<r$ there exists a constant $c=c(\eps,r^*,r)$
  such that for any $x\geq 0$
  \begin{displaymath}
    \E\tau^{r^*}_A\leq cx^{2r}\ \text{ whenever $Y_0=x$ a.s.}
  \end{displaymath}
\end{thm}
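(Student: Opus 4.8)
The plan is to build everything out of the single piece of information we are handed, namely the drift of $Y_n^{2r}$, and to convert it into a time-moment bound by combining three ingredients: an integrated-drift estimate, a maximal inequality for the running maximum, and one application of Hölder's inequality whose admissible exponents degenerate exactly at $r^*=r$. Throughout I would work with the truncated variable $\tau_A\wedge N$, derive bounds that are uniform in $N$, and pass to the limit by monotone convergence only at the end; this also yields $\tau_A<\infty$ almost surely as a by-product, so no separate recurrence step is required.

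First I would extract two supermartingales. Writing $U_n=Y_n^{2r}$, the hypothesis reads $\E[U_{n+1}-U_n\mid\mathcal{F}_n]\le-\eps Y_n^{2r-2}$ on $\{\tau_A> n\}$. Since the correction term is nonnegative, $\{U_{n\wedge\tau_A}\}$ is itself a nonnegative supermartingale, and $V_n=U_{n\wedge\tau_A}+\eps\sum_{k=0}^{(n\wedge\tau_A)-1}Y_k^{2r-2}$ is a supermartingale as well. From $\E V_n\le V_0=x^{2r}$ and $U\ge0$ I would obtain, after letting $N\to\infty$, the integrated-drift bound $\eps\,\E\sum_{k=0}^{\tau_A-1}Y_k^{2r-2}\le x^{2r}$. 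Separately, Doob's maximal inequality for the nonnegative supermartingale $\{U_{n\wedge\tau_A}\}$ gives $\P(\sup_k U_{k\wedge\tau_A}\ge\lambda)\le x^{2r}/\lambda$, and integrating this tail yields $\E(\max_{k<\tau_A}Y_k)^{2r\theta}\le x^{2r\theta}/(1-\theta)$ for every $\theta\in(0,1)$.

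The bridge to the passage time is the elementary deterministic inequality coming from $2r-2<0$: since $Y_k\le\max_{j<\tau_A}Y_j$ forces $Y_k^{2r-2}\ge(\max_{j<\tau_A}Y_j)^{2r-2}$, summing over $k<\tau_A$ gives $\tau_A\le(\max_{j<\tau_A}Y_j)^{2-2r}\sum_{k<\tau_A}Y_k^{2r-2}$. I would then apply Hölder's inequality with conjugate exponents $p=1/(1-r^*)$ and $q=1/r^*$ to $\tau_A^{r^*}\le(\max_{j<\tau_A}Y_j)^{(2-2r)r^*}\big(\sum_{k<\tau_A}Y_k^{2r-2}\big)^{r^*}$, controlling the second factor by the integrated-drift bound (its $q$-th power is exactly the first power of the sum) and the first factor by the maximal bound with $\theta=(1-r)r^*/(r(1-r^*))$. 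The condition $\theta<1$ is precisely $r^*<r$, which is exactly where the hypothesis on $r^*$ enters. Tracking the powers of $x$, the two contributions multiply to $\E\tau_A^{r^*}\lesssim x^{2r^*}$; since $\tau_A=0$ when $x\le A$ we may assume $x>A$, and then $x^{2r^*}\le A^{2(r^*-r)}x^{2r}$, which gives the stated bound $\E\tau_A^{r^*}\le c\,x^{2r}$.

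The main obstacle, conceptually and technically, is the degeneracy of the drift: when $Y_n$ is large the downward push $-\eps Y_n^{2r-2}$ is weak, so the process can make long, far-reaching excursions that the integrated drift alone cannot detect. The role of the running-maximum factor is precisely to pay for these excursions, and the fact that its moment is finite only for exponents below $1$ is what caps the achievable time-moment at $r^*<r$. Making the bookkeeping of the Hölder exponents line up exactly with the maximal-inequality threshold is the delicate point; the remaining work — verifying integrability, justifying the stopping and the $N\to\infty$ limit — is routine.
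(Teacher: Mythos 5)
This statement is an imported result: the paper quotes it verbatim from \cite{asp:96} and gives no proof of its own, so there is nothing in the text to compare your argument against line by line. Judged on its own terms, your proof is correct and complete. The three ingredients fit together exactly as you describe: the stopped process $U_{n\wedge\tau_A}$ with $U_n=Y_n^{2r}$ is a nonnegative supermartingale, the compensated process gives the occupation bound $\eps\,\E\sum_{k<\tau_A}Y_k^{2r-2}\leq x^{2r}$, Ville's maximal inequality plus tail integration gives $\E\bigl(\max_{j<\tau_A}Y_j\bigr)^{2r\theta}\leq x^{2r\theta}/(1-\theta)$ for $\theta<1$, and the pointwise bound $\tau_A\leq(\max_j Y_j)^{2-2r}\sum_k Y_k^{2r-2}$ (valid because $2r-2<0$) feeds into H\"older with exponents $1/(1-r^*)$ and $1/r^*$. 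I checked the exponent bookkeeping: the required $\theta=(1-r)r^*/(r(1-r^*))$ is indeed $<1$ precisely when $r^*<r$, and the powers of $x$ combine to $2r\theta(1-r^*)+2r r^*=2r^*$, after which $x^{2r^*}\leq A^{2(r^*-r)}x^{2r}$ on $\{x>A\}$ recovers the stated form. Two small remarks: your argument tacitly uses $r<1$ (so that $x\mapsto x^{2r-2}$ is decreasing), which is the only regime in which the theorem is applied in this paper but is not written into the statement; and your final constant depends on $A$ as well as on $\eps,r^*,r$, which is harmless since $A$ is among the fixed data of the hypothesis, but differs cosmetically from the quoted $c=c(\eps,r^*,r)$ because you actually prove the sharper bound $\E\tau_A^{r^*}\lesssim x^{2r^*}$. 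Neither point affects the validity of the proof or its sufficiency for the way Theorem~\ref{th:asp-96} is used in the proof of Theorem~\ref{th:bf-moments}.
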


\begin{thm}[{\cite[Corollary 1]{asp:96}}]\label{th:asp-96-1}
  Let $\{Y_n\}_{n\geq 0}$, $\tau_A$ be as in Theorem~\ref{th:asp-96}. Suppose there exist
  positive constants $A,\eps,$ and $J$ such that for any n,
  \begin{displaymath}
    \E[Y^2_{n+1}-Y^2_n\cond\fff_n]\geq -\eps \ \text{on\ } \{\tau_A>n\}
  \end{displaymath}
  and, for some $\rho>1$,
  \begin{displaymath}
    Y_n^{2-2\rho}\E[Y^{2\rho}_{n+1}-Y^{2\rho}_n\cond\fff_n]\leq J\text{\ on\ } \{
    \tau_A>n\}.
  \end{displaymath}
  Suppose also that $Y_0=x>A$ and for some positive $r_0$ the process
  $\{Y^{2r_0}_{n\wedge\tau_A}\}_{n\geq 0}$ is a submartingale. Then for
  any $r>r_0$, $\E\tau^r_A=\infty$.
\end{thm}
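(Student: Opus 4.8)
The plan is to argue by contradiction: suppose that $\E\tau_A^r<\infty$ for some $r>r_0$, and derive an incompatibility between the \emph{slow} decay of $\P(\tau_A>n)$ forced by the submartingale hypothesis and the \emph{fast} decay $\P(\tau_A>n)\le C_0 n^{-r}$ forced (via Markov's inequality) by finiteness of the $r$-th moment. The starting point is a lower bound on the ``mass'' that must survive up to time $n$. Writing $\delta:=x^{2r_0}-A^{2r_0}>0$ (positive since $x>A$), the submartingale property of $\{Y_{n\wedge\tau_A}^{2r_0}\}$ gives $\E[Y_{n\wedge\tau_A}^{2r_0}]\ge Y_0^{2r_0}=x^{2r_0}$, while on $\{\tau_A\le n\}$ one has $Y_{\tau_A}\le A$ and hence $Y_{\tau_A}^{2r_0}\le A^{2r_0}$. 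Splitting the expectation over $\{\tau_A\le n\}$ and $\{\tau_A>n\}$ therefore yields
\[
  \E\bigl[Y_n^{2r_0}\one\{\tau_A>n\}\bigr]\ge x^{2r_0}-A^{2r_0}=\delta>0\qquad\text{for every }n .
\]
The whole difficulty is to show that this definite amount of mass cannot be carried by $\{\tau_A>n\}$ if that event is too rare.

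To exploit the bound I will control the stopped moment $v_n:=\E[Y_{n\wedge\tau_A}^{2\rho}]$. Summing the increment estimate supplied by the second hypothesis, $\E[Y_{n+1}^{2\rho}-Y_n^{2\rho}\mid\fff_n]\le J\,Y_n^{2\rho-2}$ on $\{\tau_A>n\}$, and bounding $\E[Y_{n\wedge\tau_A}^{2\rho-2}]\le v_n^{(\rho-1)/\rho}$ by Jensen, gives the recursion $v_{n+1}\le v_n+J\,v_n^{(\rho-1)/\rho}$ and hence the a priori diffusive bound $v_n\le C(1+n)^{\rho}$. The decisive improvement comes from feeding in the assumed tail bound: Hölder's inequality with exponent $\rho/(\rho-1)$ turns the increment estimate into $\E[Y_{k\wedge\tau_A}^{2\rho-2}\one\{\tau_A>k\}]\le v_k^{(\rho-1)/\rho}\,\P(\tau_A>k)^{1/\rho}$, so that $v_n\le x^{2\rho}+J C_0^{1/\rho}\sum_{k<n} v_k^{(\rho-1)/\rho}k^{-r/\rho}$. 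I expect this self-improving (bootstrap) step to be the main obstacle. Substituting a trial bound $v_k\le B k^{a}$ and matching exponents shows that the resulting map $a\mapsto a\,(\rho-1)/\rho+(\rho-r)/\rho$ is a contraction with unique fixed point $a^{*}=\rho-r$; iterating the a priori exponent $\rho$ down to $\rho-r$ then gives $v_n\le C'(1+n)^{\rho-r}$ when $r<\rho$ (and a \emph{bounded} $v_n$ when $r\ge\rho$, where the fixed point is non-positive). Making the iteration and the accompanying summation asymptotics rigorous, and treating the borderline cases where a summation exponent crosses $-1$, is the delicate technical heart of the argument.

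Finally I will combine the two estimates. Hölder's inequality with the conjugate exponents $\rho/r_0$ and $\rho/(\rho-r_0)$ gives $\E[Y_n^{2r_0}\one\{\tau_A>n\}]\le v_n^{r_0/\rho}\,\P(\tau_A>n)^{\,1-r_0/\rho}$; inserting $v_n\le C'(1+n)^{\rho-r}$ together with the lower bound $\delta$ yields $\P(\tau_A>n)\ge c\,n^{-(\rho-r)r_0/(\rho-r_0)}$ for all large $n$ (in the case $r\ge\rho$ the bounded $v_n$ instead forces $\P(\tau_A>n)\ge c>0$, already contradicting $\tau_A<\infty$ a.s.). Comparing with $\P(\tau_A>n)\le C_0 n^{-r}$ forces $(\rho-r)r_0/(\rho-r_0)\ge r$, which rearranges to exactly $r_0\ge r$ and contradicts $r>r_0$; hence $\E\tau_A^r=\infty$. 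It is worth stressing that the exponent $(\rho-r)r_0/(\rho-r_0)$ produced here collapses to the sharp value $r_0$ precisely at the threshold $r=r_0$, so the bootstrap---rather than the cruder estimate coming from $v_n\le C(1+n)^{\rho}$ alone---is indispensable. The first hypothesis $\E[Y_{n+1}^2-Y_n^2\mid\fff_n]\ge-\eps$ enters by guaranteeing that the process spreads diffusively from below, which is what underpins the existence of a genuine submartingale $\{Y_{n\wedge\tau_A}^{2r_0}\}$ on which the whole argument rests.
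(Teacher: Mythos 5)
The paper does not prove this statement at all: it is imported verbatim from \cite{asp:96} (Corollary~1 there), so there is no internal proof to compare against, and your attempt has to be judged on its own. On its merits, your reconstruction is essentially sound, and it follows the same contradiction-plus-bootstrap philosophy as the original reference. The skeleton checks out: the submartingale hypothesis with $Y_\tau\leq A$ on $\{\tau_A\leq n\}$ does give the retained mass $\E[Y_n^{2r_0}\one\{\tau_A>n\}]\geq x^{2r_0}-A^{2r_0}=\delta$; the drift condition for $\rho$ plus Jensen gives the a priori bound $v_n=\E[Y_{n\wedge\tau_A}^{2\rho}]\leq C(1+n)^\rho$; the H\"older refinement $\E[Y_n^{2\rho-2}\one\{\tau_A>n\}]\leq v_n^{(\rho-1)/\rho}\P(\tau_A>n)^{1/\rho}$ is correct; the affine map $a\mapsto a(\rho-1)/\rho+(\rho-r)/\rho$ is indeed a contraction with fixed point $\rho-r$; and the final algebra is right, since $(\rho-r)r_0/(\rho-r_0)\geq r$ rearranges exactly to $r_0\geq r$. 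Because the iteration only approaches the fixed point, what you actually get is $v_n\leq C_\eps(1+n)^{\rho-r+\eps}$ for every $\eps>0$, but the strict inequality $r>r_0$ absorbs the $\eps$, so the contradiction still closes; similarly, at the borderline $r=\rho$ the bootstrap yields $v_n=O(n^{\eps})$ rather than boundedness, which is harmless for the same reason. These are the routine repairs you yourself flag.

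Two caveats deserve explicit mention. First, your final H\"older step (exponents $\rho/r_0$ and $\rho/(\rho-r_0)$) silently requires $r_0<\rho$, which the statement as quoted does not guarantee; if $r_0\geq\rho$ you have no way to control $Y_n^{2r_0}$ on $\{\tau_A>n\}$ by the $2\rho$-th stopped moment, and your argument genuinely stalls there. (In this paper's application the gap is immaterial, since $r_0<1<\rho$; one can also reduce to $r$ close to $r_0$ by monotonicity of moments, but that does not rescue $r_0\geq\rho$.) Second, your proof never uses the first hypothesis $\E[Y_{n+1}^2-Y_n^2\cond\fff_n]\geq-\eps$ --- which would make your result formally stronger --- yet your closing sentence claims this hypothesis ``underpins the existence'' of the submartingale $\{Y^{2r_0}_{n\wedge\tau_A}\}$. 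That is a misstatement: the submartingale property is a separate standing assumption, not a consequence of the drift lower bound, and in the original argument of \cite{asp:96} that lower bound plays its own role. Strike or correct that remark; with it removed and the restriction $r_0<\rho$ stated, the proof is acceptable.
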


We will also need the following technical Lemma. 

\begin{lem}\label{lem:coupling}
  Let $\{\zeta_n\}_{n\geq 0}$ be a discrete time BF model starting from
  the ground state $\zeta_0=\vec{0}$ with the parameter distribution
  $\ppp = \{p_1, p_2, \dotsc\}$ possibly with a finite support:
  $p_1\geq p_2\geq p_3 \geq \dotsc \geq 0$. 
  Then for $K = \min\{k:\ \sum_{i=k}^\infty p_i\leq 1/2\}$
  and any $n=1,2,\dotsc$, the 
    vector $(\zeta^K_n, \zeta^{K+1}_n,\dotsc)$ is
    stochastically dominated by the vector $(\czeta^K, \czeta^{K+1},\dotsc)$
    of i.i.d. $\Bern(1/2)$ random variables.
\end{lem}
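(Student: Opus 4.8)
The plan is to prove the statement through the sequential coupling criterion for domination by a product measure, reducing the whole claim to a pointwise bound on conditional probabilities that will hold \emph{precisely} because $\sum_{i\ge K}p_i\le 1/2$. First I would record the one distributional input needed. Writing $N_k$ for the number of times bit $k$ is selected among the first $n$ steps and $\sigma^k_n=(-1)^{\zeta^k_n}$, the vector $(N_k)_{k\ge1}$ is multinomial, so for any finite index set $S$ the sum $\sum_{k\in S}N_k$ is $\mathrm{Binomial}(n,p_S)$ with $p_S=\sum_{k\in S}p_k$, and hence
\begin{displaymath}
\E\Big[\prod_{k\in S}\sigma^k_n\Big]=\E\big[(-1)^{\sum_{k\in S}N_k}\big]=(1-2p_S)^n .
\end{displaymath}
I would then invoke the standard sequential criterion: if the tail coordinates are revealed in the order $K,K+1,\dots$ and $\P(\zeta^k_n=1\mid \zeta^K_n,\dots,\zeta^{k-1}_n)\le 1/2$ a.s.\ for every $k\ge K$, then $(\zeta^K_n,\zeta^{K+1}_n,\dots)\stleq(\czeta^K,\czeta^{K+1},\dots)$. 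The coupling attaches a fresh uniform $U_k$ to each coordinate and sets $\czeta^k=\one\{U_k\le 1/2\}$ and $\zeta^k_n=\one\{U_k\le P_k\}$, where $P_k\le 1/2$ is the conditional probability; this forces $\zeta^k_n\le\czeta^k$ while the $\czeta^k$ remain i.i.d.\ $\Bern(1/2)$.

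Next I would turn the conditional bound into a sign inequality. Fix $k\ge K$, put $J=\{K,\dots,k-1\}$, and condition on $\{\zeta^j_n=x_j,\ j\in J\}$. The bound $\le 1/2$ is equivalent to $\E[\sigma^k_n\,\one\{\zeta^j_n=x_j,\ j\in J\}]\ge 0$. Expanding the indicator as $2^{-|J|}\sum_{S\subseteq J}(-1)^{|S\cap L|}\sigma_S$, where $L=\{j\in J:x_j=1\}$ and $\sigma_S=\prod_{j\in S}\sigma^j_n$, and applying the correlation identity (noting $\sigma^k_n\sigma_S=\sigma_{S\cup\{k\}}$ since $k\notin S$), the claim reduces to
\begin{displaymath}
\sum_{S\subseteq J}(-1)^{|S\cap L|}\,\big(1-2p_{S\cup\{k\}}\big)^n\ \ge\ 0 .
\end{displaymath}
Splitting $S=S_L\sqcup S_{L'}$ along $J=L\sqcup L'$ puts all the signs on $S_L$, so it suffices to show, for each fixed $S_{L'}\subseteq L'$, that the signed inner sum over $S_L\subseteq L$ is nonnegative; with $c=1-2p_k-2p_{S_{L'}}$ this inner sum equals $\sum_{S_L\subseteq L}(-1)^{|S_L|}(c-2p_{S_L})^n$.

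The core is then the elementary inequality: for $c\in\R$ and nonnegative $p_j$, if $c-2\sum_{j\in L}p_j\ge 0$ then $\sum_{S\subseteq L}(-1)^{|S|}(c-2p_S)^n\ge 0$. I would prove it by induction on $|L|$. Peeling off one index $j_0$ and using $(c-2p_{S'})^n-(c-2p_{S'}-2p_{j_0})^n=2n\int_0^{p_{j_0}}(c-2p_{S'}-2w)^{n-1}\,dw$ rewrites the sum as $2n\int_0^{p_{j_0}}\big[\sum_{S'\subseteq L\setminus\{j_0\}}(-1)^{|S'|}(c-2w-2p_{S'})^{n-1}\big]\,dw$; the bracket has the same form with $|L|-1$ indices, exponent $n-1$, and shifted constant $c-2w$ still satisfying the hypothesis because $(c-2w)-2p_{L\setminus\{j_0\}}\ge c-2p_L\ge 0$, while the base case $L=\varnothing$ is $c^n\ge 0$. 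Applying this with $c=1-2p_k-2p_{S_{L'}}$ gives $c-2p_L\ge 1-2p_{\{k\}\cup J}\ge 1-2\sum_{i\ge K}p_i\ge 0$, and \emph{this is the only place the definition of $K$ enters}: the integrand bases stay nonnegative exactly because $\{k\}\cup J$ sits in the tail and $\sum_{i\ge K}p_i\le 1/2$.

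I expect the main obstacle to be conceptual rather than computational, and two-fold. First, one must recognize that the tempting continuous-time (Poissonisation) route, where all bits are independent and each active with probability $<1/2$, only yields domination for the Poisson-mixed configuration $\zeta_{N(t)}$ and not for a fixed step $n$, so a genuinely discrete-time argument is unavoidable. Second, one must see that domination should be attacked via the sequential conditional-probability criterion and that the resulting bound is equivalent to the finite-difference sign inequality above, whose positivity is controlled precisely by the threshold $\sum_{i\ge K}p_i\le 1/2$. Once this correspondence is identified, the remaining steps are routine.
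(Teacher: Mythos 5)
Your proposal is correct, but it proves the lemma by a genuinely different route than the paper. The paper's proof is a dynamic coupling of two copies of the Markov chain --- one started from $\vec{0}$, one started from the stationary i.i.d.\ $\Bern(1/2)$ configuration --- driven by a common uniform variable at each step, with a measure-preserving ``swap'' map that redirects any flip falling on a discrepancy site (necessarily an index $\geq K$) into the buffer $\{1,\dotsc,K-1\}$ of the other chain; the condition $\sum_{i\geq K}p_i\leq 1/2$ is exactly what guarantees the reflection $u\mapsto 1-u$ lands in the buffer, and the domination is preserved for all $n$ simultaneously by induction on the step. You instead work statically at a fixed $n$: you exploit the multinomial structure to get the exact parity correlations $\E\prod_{k\in S}(-1)^{\zeta^k_n}=(1-2p_S)^n$, and verify the sequential conditional-probability criterion for domination by a product measure, which reduces to an alternating-sum positivity inequality proved by iterated differencing; the threshold $\sum_{i\geq K}p_i\leq 1/2$ enters precisely as the condition keeping all the bases $c-2p_S$ nonnegative. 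Each approach has its merits: the paper's coupling is a single construction valid for every $n$ at once and transfers directly to the embedded chains used in the proof of Theorem~\ref{th:bf-moments}, while yours is more explicit and elementary, produces exact correlation formulas, and makes transparent why $1/2$ is the critical mass. Two small points to tighten: (a) state the sequential domination criterion you invoke (domination by a product measure follows from $\P(\zeta^k_n=1\mid \zeta^K_n,\dotsc,\zeta^{k-1}_n)\leq 1/2$ for every $k$ and every configuration of the preceding coordinates, via the coordinate-by-coordinate coupling you describe; finite-dimensional domination then extends to the infinite vector); and (b) in the induction, handle the case where the exponent reaches $0$ before the index set is exhausted --- there the alternating sum over a nonempty set vanishes identically, so no hypothesis on $c$ is needed --- which makes the double descent in $(|L|,n)$ airtight.
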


\begin{proof}[Proof of Lemma~\ref{lem:coupling}]
  Assume that $\sum_{k=K}^\infty p_k > 0$, otherwise the Lemma statement is trivial.
  Let $\{\zeta_n\}_{n\geq 0}, \{\czeta_n\}_{n\geq 0}$ be two discrete
  time BF models with the same transition probabilities
  \begin{equation}\label{eq:evoprobs}
    \P\{\zeta^k_{n+1} = 1-\zeta^k_{n}\}= \P\{\czeta^k_{n+1} =
    1-\czeta^k_{n}\}=p_k,\  k\in\N,\ n=0,1,\dotsc,
  \end{equation} 
  where the first one starts from the ground state $\zeta_0=\vec{0}$
  and the second one starts from the stationarity: 
  $\czeta_0$ is a sequence of i.i.d.\ symmetric Bernoulli random variables:
  \begin{displaymath}
    \P\{\czeta^k_0 =1\}=1-\P\{\czeta^k_0 =0\}=1/2\quad \text{for all}\ k\in\N.
  \end{displaymath}
  Obviously, $0=\zeta^k_0\leq \czeta^k_0$ for all $k\in\N$ almost
  surely.  Our goal is to couple the Markov chains $\{\zeta_n\}$ and
  $\{\czeta_n\}$ on $\{0,1\}^{\N}$ preserving the almost sure
  coordinate-wise domination $\zeta^k_n\leq \czeta^k_n$ for
  all $n$ and all $k=K,K+1,\dotsc$.

  The idea is to treat the first $(K-1)$ bits of both Markov chains as
  a kind of `buffer' for which the domination does not generally
  holds. This is an expense to pay for the donination for the large coordinates.
  
  Specifically, we define the joint transition dynamics for $\zeta_n,
  \czeta_n$ inductively, for $n=0,1,2,\dotsc$.  Denote by $D_n$ the
  (random) set of discrepancies at time $n$, i.e.\ the set of indices
  $k\geq K$ at which $\zeta_n, \czeta_n$ disagree. The induction
  assumption is that the coordinate-wise domination is preserved on
  step $n$: $\zeta_n^k\leq \czeta_n^k$ for all $k\geq K$ and hence
  only discrepancies of the form $\zeta^k_n=0,\ \czeta^k_n=1$ are
  possible. Denote these by
  \begin{displaymath}
    D_n = \{k\geq K: \zeta^k_n=0,\ \czeta^k_n=1\}.
  \end{displaymath}
  The domination obviously holds for $n=0$.

  Let $F^{-1}(u)$ be the quantile function for the distribution $\mathcal{P}$:
  \begin{displaymath}
    F^{-1}(u) = \min\left\{k: \sum_{i=1}^k p_i > u\right\},\ \  u\in (0,1).
  \end{displaymath}
  The key element of the construction is a map $s_n(u):(0,1)\to(0,1)$
  which swaps the parts of $(0,1)$ corresponding to $D_n$ with the
  parts of $(0,1)$ of the same length, corresponding to the buffer:
  \begin{displaymath}
    s_n(u) = \begin{cases}
      1-u, & \text{if } F^{-1}(u)\in D_n, \text{ or } F^{-1}(1-u) \in D_n\\
      u, & \text{otherwise.}
    \end{cases}
  \end{displaymath}

  Introduce a common source of randomness for the chains: the sequence
  $U_1, U_2, \dotsc$ of i.i.d.~random variables distributed
  uniformly on the interval $(0,1)$. The indices of the bits to flip on step $(n+1)$
  in $\zeta_n$ and $\czeta_n$, $n=0,1,\dotsc$, are defined, respectively, as
  \begin{align*}
  \chi_{n+1} &= F^{-1}(U_{n+1}), \\
  \cchi_{n+1} &= F^{-1}(s_n(U_{n+1})).
  \end{align*}
  Since $s_n(u)$ preserves the Lebesgue measure, $s_n(U_{n+1})$ is
  also uniformly distributed implying that both chains have correct
  transition probabilities:
  \begin{displaymath}
  \P(\chi_{n+1} = k) = \P(\cchi_{n+1} = k) = p_k, k=1,2,\dotsc, n=0,1,2,\dotsc
\end{displaymath}
Moreover, if one of the chains is flipped at some coordinate $k\geq
K$, where the chains agree, the other one does the same. If,
otherwise, one of the chains is selected to be flipped at some
coordinate $k\geq K$, where the chains disagree, the other one is
flipped at one of the coordinates $k=1,2,\dotsc,K-1$ of the buffer. As
a result, the chains will agree at the flipped coordinate from
$D_n$. Thus no new discrepancies are created for $k\geq K$ and the
coordinate-wise domination $\zeta^k_{n+1}\leq \czeta^k_{n+1}$ is preserved
almost surely.

\end{proof}

\begin{proof}[Proof of Theorem~\ref{th:bf-moments}]

  Part (i).  Denote by $M_n$ the index of the rightmost active bit at
  time $n$: $M_n=\max\{k:\zeta^k_{n}=1\}$ with convention $M_n = 0$ for
  $\zeta_n = \mathbf{0}$. Put $Y_n^{2r}=y^{M_n}$ for some $y>1$ which
  will be selected later. Define the filtration $\fff_n=\sigma(
  \vec{\zeta}_0,M_1,\dotsc,M_n)$. The process $\{Y_n\}$ is obviously adapted to
  $\{\fff_n\}$. Recall that $\chi_k$ is an index of a bit flipped on step $k$, $\chi_k\sim
\mathcal{P}$ by the assumptions of (i). We have that
\begin{equation}\label{eq:expyto2r}
  \E (Y_n^{2r})= \E (y^{M_n})\leq \E (y^{\sum_{k=1}^n \chi_k})=\left(\E (y^{\chi_1})
  \right)^n.
\end{equation}
The inequality above follows, since $M_n\leq \max\{\chi_1, \chi_2,
\dotsc, \chi_n\} \leq \sum_{k=1}^n \chi_k$, so that the right-hand side
of~\eqref{eq:expyto2r} is finite whenever
\begin{equation}\label{eq:cond1}
  py<1.
\end{equation}
Next,
\begin{align*}
  \E[Y_{n+1}^{2r}-Y_n^{2r}\cond M_n=m]
  &=\underbrace{\E[(Y_{n+1}^{2r}-Y_n^{2r})\one_{\{\chi_{n+1}=m\}}\cond M_n=m]}_{E_1}\\
  &+\underbrace{\E[(Y_{n+1}^{2r}-Y_n^{2r})\one_{\{\chi_{n+1}>m\}}\cond M_n=m]}_{E_2}.
\end{align*}

Introduce $\psi(x_K\thru x_{m-1}) = y^{\max\{j:\ x_j=1,\ j=K\thru m-1\}}-y^m$.
Then
\begin{align}
  E_1 &\leq \E\left[\psi(\zeta_n^K, \dotsc, \zeta_n^{m-1})
    \one\{\chi_{n+1} = m\} \cond M_n = m\right]\nonumber\\
  &=p_m \E\left[\psi(\zeta_n^K, \dotsc, \zeta_n^{m-1}) \cond M_n = m\right] \label{eq:E1_1}
\end{align}
Our next step is to show that the vector $(\zeta_n^K, \dotsc, \zeta_n^{m-1})$ conditionally
on $\{M_n=m\}$ is stochastically dominated by a vector of i.i.d.\ Bernoulli random variables
$(\czeta^K, \dotsc, \czeta^{m-1})$.
Introduce an embedded Markov chain 
\begin{displaymath}
\Big\{\tzeta_l\Big\}_{l\geq 0} = \Big\{(\tzeta^1_l, \dotsc, \tzeta^{m-1}_{l})\Big\}_{l\geq 0}
\end{displaymath}
tracking the state of the first $(m-1)$ coordinates of $\{\zeta_n\}$
considered at the times when one of those coordinates changes.  We set
$\tzeta_0 = (\zeta^1_0, \dotsc, \zeta^{m-1}_0)$ and define
$\tzeta_l=(\zeta^1_{t_l(m)}, \dotsc, \zeta^{m-1}_{t_l(m)})$, where
$t_l(m)$ is the $l$th time when one of the first $(m-1)$ coordinates
of $\zeta_n$ is flipped. 

Lemma~\ref{lem:coupling} applied to the BF model $\{\tzeta_l\}_{l\geq
  0}$ with the corresponding flipping probabilities 
\begin{displaymath}
\widetilde{\ppp}
= \left\{\frac{p_1}{S_{m-1}}, \dotsc,
  \frac{p_{m-1}}{S_{m-1}}, 0, 0, \dotsc \right\},\quad S_{m-1}=\sum_{k=1}^{m-1}{p_k},
\end{displaymath}
implies for every $l=0,1,2,\dotsc$ the stochastic domination
\begin{displaymath}
  (\tzeta^{\tilde{K}}_l, \dotsc, \tzeta^{m-1}_l) \stleq
  (\czeta^{\tilde{K}}, \dotsc, \czeta^{m-1}),
\end{displaymath}
where $\czeta^{\tilde{K}},\dotsc,\czeta^{m-1}$ are i.i.d.\ $\Bern(1/2)$
random variables. Note that
\begin{displaymath}
   \tilde{K} = \min\{k:\ \sum_{i=k}^\infty p_i\leq 1/2\,
   S_{m-1}\}\leq K = \min\{k:\ \sum_{i=k}^\infty p_i\leq 1/2\}.
\end{displaymath}
Therefore, for every  $l=0,1,2,\dotsc$,
\begin{equation}\label{eq:stochdomtzeta}
(\tzeta^{K}_l, \dotsc, \tzeta^{m-1}_l) \stleq (\czeta^{K}, \dotsc,
\czeta^{m-1}).
\end{equation}

Introduce the series of events:
\begin{align*}
  A(n, m, l)  = &\{\text{by the time }n\text{ the first }m-1\text{
    coordinates of }\zeta \\
    &\text{ are flipped }l\text{ times}\}\\
   = &\Bigl\{\sum_{k=1}^n \one\{1\leq \chi_k\leq m-1\}=l\Bigr\},
\end{align*}
for $n=0,1\dotsc$, and $l=0,\dotsc, n$.  Conditionally on $A(n,m,l)$,
the distribution of $(\zeta^1_n, \dotsc, \zeta^{m-1}_n)$ is the same
as that of $(\tzeta^1_l, \dotsc, \tzeta^{m-1}_l)$, so we can
continue~\eqref{eq:E1_1} with:
\begin{align*}
  E_1 &\leq p_m
  \sum_{l=0}^{n}\E\left[\psi(\zeta_n^K,\dotsc,\zeta_n^{m-1})
    \one_{A(n, m, l)} \cond M_n=m\right]\\  
  &= p_m\sum_{l=0}^n \E\left[\psi(\zeta^K_n, \dotsc,
    \zeta^{m-1}_n)\one_{M_n=m} \cond A(n,m,l)\right]\,
  \frac{\P\left(A(n,m,l)\right)}{\P(M_n=m)}. 
\end{align*}
Now notice, that conditionally on $A(n,m,l)$, random variables\\
$\psi(\zeta^K_n, \dotsc, \zeta^{m-1}_n)=\psi(\tzeta^K_l, \dotsc,
\tzeta^{m-1}_l)$ and $\one_{M_n=m}$ are independent. Indeed, on
$A(n,m,l)$, the first variable is a function of the chain $\tzeta$
after $l$ steps which is governed by transition probabilities
$\widetilde{\ppp}$. While the event $M_n=m$ relates to configuration
of the bits $m,m+1,\dots$ after $n-l$ steps of the BF model with
parameter distribution $\{p_k/(1-S_{m-1}),\ k=m,m+1,\dots\}$. Therefore,
\begin{align*}
  E_1 \leq& p_m\sum_{l=0}^n \E\left[\psi(\zeta^K_n, \dotsc,
    \zeta^{m-1}_n) \cond A(n,m,l)\right] \times \\
    &\P\big(M_n=m \cond
    A(n,m,l)\big)\frac{\P\big(A(n,m,l)\big)}{\P(M_n=m)} \\ 
  =& p_m\sum_{l=0}^n \E\left[\psi(\tzeta^K_l, \dotsc,
    \tzeta^{m-1}_l)\right]\, \P\big(A(n,m,l)\cond M_n=m\big).
\end{align*}
The function $\psi$ is non-decreasing with respect to the
coordinate-wise order on its argument, so the stochastic
domination~\eqref{eq:stochdomtzeta} implies:
\begin{align*}
E_1 &\leq p_m
\E\psi(\czeta^K,\dotsc,\czeta^{m-1})\,\underbrace{\sum_{l=0}^n
  \P\left(A(n,m,l)\cond M_n=m\right)}_{=1} \\ 
 &=p_m \E\psi(\czeta^K,\dotsc,\czeta^{m-1}) \\
 &= \sum\limits_{k=K}^{m-1} (y^{k}-y^m)p_m\left(\frac{1}{2}\right)^{m-k}.
\end{align*}
Fix an arbitrary small $\eps>0$. Since $p_k \sim C_1 p^k, k\to\infty$,
one can, if necessary, increase $K$ so that $p_k\geq C_1(1-\eps) p^k$
for any $k\geq K$, and continue:
\begin{align*}
  E_1&\leq C_1(1-\eps)(py)^m
  \sum\limits_{k=K}^{m-1}((2y)^{-k}-2^{-m+k})\\
  &=C_1(1-\eps)(py)^m\left(\frac{2-2y}{2y-1} -\frac{(2y)^{-m+K}}{2y-1}
  +2^{-m+K}\right)\\
  &\leq C_1(1-\eps)(py)^m\left(\frac{2-2y}{2y-1}+2^{-m+K}\right).
\end{align*}
Because of our assumption $py<1$, and the asymptotic equivalence $p_k \sim C_1 p^k, k\to\infty$,
for an arbitrary small $\eps>0$ we can choose a large enough $M=M(\eps)$ so that
for any $m\geq M$:
\begin{align*}
  E_2&=\sum\limits_{k=1}^\infty p_{m+k}(y^{m+k}-y^m)\leq
  C_1(1+\eps)(py)^m\left(\sum\limits_{k=1}^\infty(py)^k-
  \sum\limits_{k=1}^\infty p^k\right)\\
  &=C_1(1+\eps)(py)^m\left(\frac{py}{1-py}-\frac{p}{1-p}\right).
\end{align*}
Introduce
\begin{displaymath}
  Q(p,y,\eps)=(1-\eps)\frac{2-2y}{2y-1}+(1+\eps)\Bigl(\frac{py}{1-py}-\frac{p}{1-p}\Bigr).
\end{displaymath}
Then $Y_n^{2r}=y^{M_n}$ yields
\begin{displaymath}
  Y_n^{2-2r}\E\left[Y_{n+1}^{2r}-Y_n^{2r}\cond M_n=m\right]
  \leq C_1(Q(p,y,\eps)+(1-\eps)2^{-m+2})(py^{\frac{1}{r}})^m.
\end{displaymath}
Now fix a $p<1/2$. For the last expression to be negative and separated
from zero for all $m$ large enough, it is necessary for $Q(p,y,\eps)$ to be
negative and for $py^{\frac{1}{r}}$ to be greater than one.
However, $Q(p,y,\eps)<0$ reduces to
\begin{displaymath}
  \begin{cases}
    0<p<1/2,\\
    1<y<\frac{1}{2p},\\
    0<\eps<\frac{2p^2y-4py-p+2}{2p^2y-3p+2}.
  \end{cases}
\end{displaymath}
The right part of the third inequality is positive whenever the first two
inequalities are satisfied. 
Putting it all together, for a fixed pair of $p,r$ we can pick $y$ and $M$ so
that $Y_n=y^{\frac{M_n}{2r}}$, given that $M_0>M$,
satisfies the conditions of Theorem~\ref{th:asp-96}
if and only if the following system of inequalities can be solved for $y$:
\begin{displaymath}
\begin{cases}
  1<y<\frac{1}{2p},\\
  py^{\frac{1}{r}}>1,
\end{cases}
\end{displaymath}
and the latter is possible when $r<1-\frac{\log 2}{\log\frac{1}{p}}$.

Denote $\tau_{x}=\inf\{n\geq 1:M_n\leq x\}$. Then 
Theorem~\ref{th:asp-96} implies that for $p<1/2$ and $r<1-\frac{\log 2}
{\log\frac{1}{p}}$ there exists $C=C(p,r)$ such that for a
particular choice of $y, M$ we have
\begin{displaymath}
  \E[\tau_M^r\cond M_0=x]\leq Cy^x\leq C\left(\frac{1}{2p}\right)^x.
\end{displaymath}
Now we prove that $\tau_\BF^r=\tau_0^r$ is integrable and satisfies the same asymptotic bound.
In $\E[\tau_0^r|M_0=x]$, $\tau_0$ is the first time when the process $M_n$ reaches
$0$ starting from the state $x$. For any $M\geq 0$ we have $\tau_0 = \tau_{M} + (\tau_M - \tau_0)$.
By simple coupling arguments,
the law of $(\tau_M - \tau_0)$, conditional on $\{M_0=x\}$, is stochastically dominated by 
the law of $\tau_0$, conditional on $\{M_0=M\}$. That, together with
the inequality $(a+b)^r\leq 2^r(a^r+b^r)$ for $0<r<1$ and non-negative $a,b$, gives
the bound:
\begin{displaymath}
\E[\tau_0^r\cond M_0=x]\leq 2^r\big(\E[\tau_M^r\cond
M_0=x]+\E[\tau_{0}^r\cond M_0=M]\big).
\end{displaymath}
We have just obtained an asymptotic upper bound for the first conditional expectation
under the parentheses. It is left now to show that the second
expectation is also bounded.  Conditionally on $\{M_0=M\}$,
$\tau_0$ is stochastically dominated from above by the sum of two
terms. The first one is the time needed
for $\vec{\zeta}_n$ to reach $\vec{0}$ not leaving
the finite sub-cube $\{0,1\}^M$, which is in turn dominated by $\tau^{\wedge M}_0
=\inf\{n:\zeta_n^{\wedge M} = \vec{0}^{\wedge M}\}$.
The second one is a geometrically 
distributed number of excursions $\gamma\sim\mathrm{Geom}(\pi)$ from
$\{0,1\}^M$. Thus
\begin{displaymath}
  \E[\tau_0^r\cond M_0=M]\leq\sum\limits_{k=1}^{\infty}
  \E[\tau_0^r\cond M_0=M,\gamma=k]\,\P\{\gamma=k\}.
\end{displaymath}
Now, conditionally on $\{\gamma=k\}$,             
\begin{align*}
  \E[\tau^r_0\cond M_0=M,\gamma=k]&\leq \E\left[\Big(\tau_0^{\wedge M}+
  \sum\limits_{j=1}^k\psi_j\Big)^r \cond M_0=M\right]\\
  &\leq k^{1+r}(\E[(\tau_0^{\wedge M})^r\cond M_0=M]+
  \E\psi^r),
\end{align*}
where $\psi_j$  is the length of excursion $j=1,\dotsc,\gamma$ and
$\psi$ stands for length of a typical excursion.
The first expectation inside the parentheses is a finite constant. As for
the second, for some constant $C_2>0$ we have
\begin{align*}
  \E\psi^r&=1+\sum\limits_{k=1}^\infty p_{k+M}\E[\tau_M^r\cond M_0=k+M]\\
  &\leq 1+\sum\limits_{k=1}^\infty C_2p^{k+M}\left(\frac{1}{2p}\right)^{k+M}
  <\infty.
\end{align*}
Thus for some constant $C_3>0$
\begin{displaymath}
  \E[\tau_0^r\cond M_0=M]\leq \sum\limits_{k=1}^\infty C_3 k^{1+r} \pi (1-\pi)^{k-1}<
  \infty,
\end{displaymath}
finishing the proof of part (i).

Proof of part~(ii). Put $Y^2_n=y^{M_n}$ for some $y>1$ and check the conditions of
Theorem~\ref{th:asp-96-1}.  As before, $Y_n$ is adapted and
for an arbitrary small $\eps>0$ we can choose $M=M(\eps)$ large enough
so that:
\begin{align*}
  \E[Y^2_{n+1}-Y^2_n\cond M_n=m]\geq& -p_m y^m +\sum\limits_{k=1}^\infty
  p_{m+k}(y^{m+k}-y^m)\\
  =&-C_1(1-\eps)p^m y^m\\
  &+\sum\limits_{k=1}^\infty
  C_1(1+\eps)p^{m+k}(y^{m+k}-y^m)\\
  =&C_1(py)^m(-1+\eps+(1+\eps)\sum\limits_{k=1}^\infty p^k(y^k-1))\\
  =&C_1(py)^m\left(-1+\eps+\frac{(1+\eps)p(-1+y)}{(1-p)(1-py)}\right)\\
  =&C_1(py)^m R(p, y, \eps),
\end{align*}
where $R(p, y, \eps) =
\left(-1+\eps+\frac{(1+\eps)p(-1+y)}{(1-p)(1-py)}\right)$.  It is then
possible to choose a small enough $\eps>0$ and a large $M$ so that the
latter expression is bounded from below for all $m>M$, when $py<1$.
Furthermore, for such $p,y$ we have as before:
\begin{align*}
  Y_n^{2-2\rho}\E[Y_{n+1}^{2\rho}-Y_n^{2\rho}\cond M_n=m]\leq
  &C_1y^{m(1-\rho)}(py^\rho)^m\times \\ &(Q(p,y^\rho,\eps)+(1-\eps)2^{-m+K})
\end{align*}
which is bounded from above when $\rho$ is such that $py^\rho<1$ (such
a $\rho>1$ exists whenever $py<1$).

Finally, check for which $r_0$ the process $Y_{n\wedge \tau_M}^{2r_0}$
is a submartingale. Since
\begin{displaymath}
  \E[Y^{2r_0}_{n+1}-Y^{2r_0}_n\cond M_n=m]\geq C_1(py^{r_0})^m
  R(p, y^{r_0}, \eps),
\end{displaymath}
we can choose $\eps>0$ so that the latter is greater than zero for any $m>M$,
if $r_0\in(\frac{\log\frac{1}{2p-p^2}}{\log y},1)$. Recalling that we can take
$y$ arbitrary close to $1/p$, we conclude that the conditions of
Theorem~\ref{th:asp-96-1} are satisfied for any $r_0$ such that
$r_0\in(1-\frac{\log(2-p)}{\log\frac{1}{p}},1)$. Adding this
together with the results of Theorem~\ref{th:null-rec} implies that
none of the fractional
moments of $\tau_M$ (and hence of $\tau_0$) of order higher than
$1-\frac{\log(2-p)}{\log\frac{1}{p}}$ exists, finishing the proof of
Part~(ii).
\end{proof}

\subsection{Transience and recurrence of DB model}

\begin{proof}[Proof of Theorem~\ref{th:db-gen-exp}]

  (i) Consider the discrete-time version of the DB model. Introduce   $R_n$ --
  the index of the rightmost bit (i.e.\ with the largest   index) that has ever
  been flipped by time $n$. The sequence   $\{R_n\}$ is a.s.\ non-decreasing.
  We aim to prove that almost   surely for infinitely many terms of the sequence
  $\{R_n\}$, each of   the bits $1, 2, \dotsc, R_n$ is flipped at least twice
  before the   next flip of some bit with an index larger than $R_n$. That would
  guarantee that the ground state of the DB model, corresponding to   the set of
  states   $$\left\{y\in \{0,1,2\}^\N: y \text{ has no 1's and only a finite
    number of 2's}\right\}$$ of Markov chain $\{\zeta_n\}$, is visited infinitely
  often.

  It is convenient to use the continuous-time representation now.  Let
  $\Pi_1(t), \Pi_2(t),\dotsc$ be the sequence of independent Poisson
  processes (clocks) describing the times at which, respectively, the
  $1$st, the $2$nd, etc.\ bits are flipped. Introduce
  $\tau_{>k}=\inf\{t>0:\sum_{j=k+1}^\infty\Pi_j(t)>0\}$, the time of
  the first flip of a bit with an index greater than $k$. Note that
  $\tau_{>k}$ is a stopping time for each $k=0,1,2,\dotsc$, and,
  moreover,
\begin{displaymath}
  \tau_{>1}\leq \tau_{>2}\leq \tau_{>3} \leq \dotsc
\end{displaymath}

Introduce the events
\begin{align*}
  &A_k & = & &\{k\text{th } &\text{bit appears in the sequence }\{R_n\}\}, \nonumber
\\  &B_k & = & &A_k\cap\{ & \text{starting from the first flip of }k\text{'th bit,
    each of the bits } \nonumber
\\   & & & & &1, 2, \dotsc, k\text{ is }
  \text{flipped at least twice before the first flip}
\\ & & & & & \text{of one of the
    bits }k+1, k+2, \dotsc\} \nonumber
\end{align*}
Our aim is to prove that the events $B_k$ happen infinitely often.
In terms of a continuous-time notation, we can rewrite:
\begin{align}
  A_k&=\{\tau_{>k-1}<\tau_{>k}\},\nonumber\\
  \label{eq:B_k-def}B_k&=\bigcap_{j\leq k}\{\Pi_j(\tau_{>k-1},\tau_{>k})\geq 2\},
\end{align}
where $\Pi(t_1,t_2)$ stands for the number of points a Poisson process
$\Pi$ has in $(t_1,t_2)$.  Since $\{\tau_{>k}\}$ is a sequence of
stopping times, it is not hard now to see that the events $B_k$ are
independent of each other. By the Borel--Cantelli Lemma it suffices to
prove that the series $\sum_{k\geq 1}\P\{B_k\}$ diverges.

The probability of $A_k$ (probability of an index $k$ to ever appear
in the sequence $\{R_n\}$) is exactly $p_k/(p_k+Q_k) =
1-Q_{k}/Q_{k-1}$, which is uniformly bounded away from zero given
assumptions of (i).

As follows from~$\eqref{eq:B_k-def}$, the probability $\P(B_k\cond
A_k)$ is equal to the probability for each of the first $k$ Poisson
clocks $\Pi_1(t), \dotsc, \Pi_k(t)$ to tick at least twice before the
time of the first tick of one of the clocks $\Pi_{k+1}(t),
\Pi_{k+2}(t), \dotsc$ We write:
\begin{align}
  \P(B_k\cond A_k) &= \P\Bigl(\bigcap_{j=1}^{k}\{\Pi_j(\taugre)\geq 2\}\Bigr)\nonumber\\
  &=\int_0^\infty \prod_{j=1}^k \P\{\Pi_j(t)\geq 2\}\, d\P(\taugre\leq
  t)\label{eq:bkcondak} 
\end{align}
Introduce $g(x)=e^{-x}(1+x)$. Now, due to (i), there exists a large
$K$ such that $\text{ for any } k\geq j\geq K$:
\begin{displaymath}
\frac{p_j}{Q_k} =
\frac{p_j}{Q_{j-1}}\frac{Q_{j-1}}{Q_j}\dotsc\frac{Q_{k-1}}{Q_k} 
\geq \left(\frac{1}{p}-1\right)\underbrace{\cdot \frac{1}{p}\dotsc
  \frac{1}{p}}_{k-j+1} = C_2 p^{j-k}. 
\end{displaymath}
The function $g(x)$ is strictly decreasing in
$x$, so we can continue~\eqref{eq:bkcondak} :
\begin{align*}
  & =\int_0^\infty \prod_{j=1}^k(1-g(p_jt)) Q_k e^{-Q_kt}\,dt \\
  & = \int_0^\infty \prod_{j=1}^k(1-g(\frac{p_j}{Q_k}t))e^{-t}\,dt\\
  & \geq C_1\int_0^\infty \prod_{j=1}^{k-K}(1-g(C_2 p^{-j}t))
  e^{-t}\,dt, \text{ for } k\geq K, 
\end{align*}
where $C_1, C_2$ are positive constants.
Next,
\begin{displaymath}
  \prod_{j=1}^{k-K}(1-g(C_2 p^{-j}t))\geq \prod_{j=1}^\infty(1-g(C_2
  p^{-j}t)). 
\end{displaymath}
Show that the latter is strictly positive:
\begin{displaymath}
  \sum_{j=1}^\infty g(C_2tp^{-j}) = \sum_{j=1}^\infty e^{-C_2tp^{-j}}(1+C_2tp^{-j})
  \leq C_3\sum_{j=1}^\infty e^{-C_4 tp^{-j}}<\infty
\end{displaymath}
for all $t$, thus $\prod_{j=1}^{k-K}(1-g(C_2 p^{-k}t))$ is bounded away from
zero uniformly in $k, k\geq K,$ by $h(t)=\prod_{j=1}^\infty(1-g(C_2 p^{-j}t))>0$,
and $\P(B_k\cond A_k)\geq C_1\int_0^\infty h(t) e^{-t}\,dt > 0$, so the series
$\sum_{k=1}^\infty \P(B_k)$ diverges and the DB model is recurrent under 
the assumptions of~(i).

(ii) Now, assume that $p_k\sim C e^{-\alpha k^\gamma}$. Consider the total time
$\nu$ spent in the ground state, when none of the bits is active. We
are going to prove for this particular choice of $p_k$ that the
expected time spent in the ground state
\begin{displaymath}
 \E\nu=\int_0^\infty \prod\limits_{k=1}^\infty(1-p_kte^{-p_kt})\, dt
\end{displaymath}
is finite. The product under the integral is bounded by
 \begin{displaymath}
\prod\limits_{k=1}^\infty(1-p_kte^{-p_kt}) \leq \exp\Big \{\card\{k:
l_{1,\eps}\leq p_kt \leq l_{2,\eps}\} \log (1-1/e+\eps)\Big\}.
\end{displaymath}
Here $l_{1,\eps},l_{2,\eps}$ are the left and the right boundaries of the interval,
where the function $xe^{-x}$ is greater or equal than $1/e-\eps$.
Taking into account the particular choice of $p_k$, we write:
\begin{align}
  \card\{k:l_{1,\eps}\leq p_kt \leq l_{2,\eps}\} &\sim 
  \left(\frac{1}{\alpha}\log\frac{tC}{l_{1,\eps}}\right)^{1/\gamma}
  -\left(\frac{1}{\alpha}\log\frac{tC}{l_{2,\eps}}\right)^{1/\gamma} \nonumber\\
  &\sim\frac{\log l_{2,\eps}-\log
    l_{1,\eps}}{\gamma\alpha^{\frac{1}{\gamma}-1}}(\log
  (tC))^{\frac{1}{\gamma}-1}, 
  \label{eq:cardbound}
\end{align}
hence the infinite product in question is integrable for $\gamma<1/2$.
\end{proof}

\begin{rem}
  The sufficient condition in Theorem~\ref{th:db-gen-exp}, (i) is
  slightly stronger than a condition similar to the one in
  Theorem~\ref{th:bf-gen-suf}, (i):
  \begin{equation}\label{eq:counterexample}
    \limsup_{k\to\infty} \beta^k p_k <\infty \text{ for some constant }
    \beta > 1. 
  \end{equation}
  It is not hard to see that the assumption of
  Theorem~\ref{th:db-gen-exp}, (i) implies~\eqref{eq:counterexample}
  for $\beta=1/(p+\eps)$ for any $\eps\in (0,1-p)$.  The converse
  implication is not true in general, for a counterexample we can put
  \begin{displaymath}
    \kappa(k) = \min\{j^2 : j\in \N \text{ and } j^2 > k\},
  \end{displaymath}
  and then
  \begin{displaymath}
    p_k = C 2^{-\kappa(k)}, \ \ k=1,2,\dotsc
  \end{displaymath}
  for a suitable constant $C$. Then~\eqref{eq:counterexample} holds
  with $\beta=2$.  The assumption of Theorem~\ref{th:db-gen-exp}, (i)
  fails to hold: for the subsequence $k_i = i^2, i=1,2,\dotsc$ we have
  \begin{align*}
    \frac{Q_{k_i}}{Q_{k_i-1}}&=1-\frac{p_{k_i}}{Q_{k_{i}-1}} =1-
    \frac{p_{i^2}}{\sum_{j=k_i}^{\infty} p_j} \geq
    1-\frac{p_{i^2}}{\sum_{j=i^2}^{(i+1)^2-1} p_{j}}\\
    &= 1- \frac{C 2^{-(i+1)}}{((i+1)^2-i^2)C 2^{-(i+1)}} \geq 1-
    \frac{1}{2i+1} \to 1, \ \ i\to\infty.
  \end{align*}
  However, the converse implication will hold if in addition
  to~\eqref{eq:counterexample} we require, for instance, the sequence
  $\{Q_k/Q_{k+1}\}$ to be monotone.
\end{rem}

\subsection{The Central Limit Theorem}

For the proof of the CLT for the number of active bits in BF and DB models
we use the following general CLT for
the triangular array, see, e.g.,\ {\cite[Ch.8,~Theorem 5]{Bor:98}}:
\begin{thm}
\label{th:bor-tri}
  Let $\{\xi_{k,n}\},\ 1\leq k\leq r_n,\ 1\leq n\leq \infty$
  be a triangular array of random variables such that $\E\xi_{k,n}=0$
  and that the random variables $(\xi_{k,n})_{1\leq k\leq r_n}$ are mutually independent
  inside of every row $n=1,2,\dotsc$. Assume that:
  \begin{itemize}
  \item[(i)]\label{th-b-i}$\sum\limits_{k=1}^{r_n}\E \xi_{k,n}^2=1,$\\
  \item[(ii)]\label{th-b-ii}$\sum\limits_{k=1}^{r_n}\E\big[\xi_{k,n}^2;\,
    |\xi_{k,n}|>
    M\big]\to 0,\ n\to\infty, \text{\ for every\ } M>0.$
  \end{itemize}
  Then
  \begin{displaymath}
    \sum_{k=1}^{r_n}\xi_{k,n} \stackrel{D}{\Longrightarrow} \mathcal{N}(0,1), \text{as\ }
      n\to\infty
  \end{displaymath}
\end{thm}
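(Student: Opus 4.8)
The plan is to prove convergence of characteristic functions and invoke the L\'evy continuity theorem; this is the classical Lindeberg--Feller route, and conditions (i)--(ii) are exactly the two ingredients it needs. Write $S_n=\sum_{k=1}^{r_n}\xi_{k,n}$, and set $\phi_{k,n}(t)=\E e^{it\xi_{k,n}}$ and $\sigma_{k,n}^2=\E\xi_{k,n}^2$, so that $\sum_k\sigma_{k,n}^2=1$ by (i). Within-row independence makes the characteristic function factorise, $\E e^{itS_n}=\prod_{k=1}^{r_n}\phi_{k,n}(t)$, and it suffices to show $\prod_k\phi_{k,n}(t)\to e^{-t^2/2}$ for every fixed $t\in\R$.

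First I would establish the uniform asymptotic negligibility $\max_k\sigma_{k,n}^2\to 0$. Splitting $\sigma_{k,n}^2=\E[\xi_{k,n}^2;|\xi_{k,n}|\le M]+\E[\xi_{k,n}^2;|\xi_{k,n}|>M]\le M^2+L_n(M)$, where $L_n(M)=\sum_j\E[\xi_{j,n}^2;|\xi_{j,n}|>M]\to 0$ by (ii), and taking $\limsup_n$ followed by $M\downarrow 0$ gives the claim. In particular, for all large $n$ one has $t^2\sigma_{k,n}^2/2<1$ for every $k$, so both $\phi_{k,n}(t)$ and the comparison factors $e^{-t^2\sigma_{k,n}^2/2}$ lie in the closed unit disc.

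The core of the argument is a Taylor estimate controlled by the Lindeberg condition. Using $\E\xi_{k,n}=0$ and the elementary bound $|e^{ix}-1-ix+x^2/2|\le\min(|x|^3/6,\,x^2)$, I would write $\phi_{k,n}(t)-1+\tfrac{t^2}{2}\sigma_{k,n}^2=\E[e^{it\xi_{k,n}}-1-it\xi_{k,n}+\tfrac{t^2}{2}\xi_{k,n}^2]$ and bound its modulus by splitting at $|\xi_{k,n}|=M$: on $\{|\xi_{k,n}|\le M\}$ use $|t\xi_{k,n}|^3/6\le\tfrac{|t|M}{6}\,t^2\xi_{k,n}^2$, while on $\{|\xi_{k,n}|>M\}$ use $|t\xi_{k,n}|^2=t^2\xi_{k,n}^2$. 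Summing over $k$ yields $\sum_k|\phi_{k,n}(t)-1+\tfrac{t^2}{2}\sigma_{k,n}^2|\le\tfrac{|t|^3M}{6}+t^2L_n(M)$, whose $\limsup_n$ equals $\tfrac{|t|^3M}{6}$, hence is arbitrarily small. Separately, $\sum_k|1-\tfrac{t^2}{2}\sigma_{k,n}^2-e^{-t^2\sigma_{k,n}^2/2}|\le\tfrac{t^4}{8}\sum_k\sigma_{k,n}^4\le\tfrac{t^4}{8}\max_k\sigma_{k,n}^2\to 0$ by the negligibility just proved.

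Finally I would combine these via the product-comparison inequality $|\prod_k z_k-\prod_k w_k|\le\sum_k|z_k-w_k|$, valid when all $|z_k|,|w_k|\le 1$, applied with $z_k=\phi_{k,n}(t)$ and $w_k=e^{-t^2\sigma_{k,n}^2/2}$. Since $\prod_k w_k=e^{-\frac{t^2}{2}\sum_k\sigma_{k,n}^2}=e^{-t^2/2}$ by (i), the two displayed bounds give $\prod_k\phi_{k,n}(t)\to e^{-t^2/2}$, and the L\'evy continuity theorem finishes the proof. The main obstacle is precisely this uniform control of the Taylor remainder across a row: condition (ii) is exactly what lets one trade the cubic bound on the bulk against the quadratic bound on the tail, and it is where all the work sits. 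An alternative I would keep in reserve is Lindeberg's direct replacement method, telescoping $\E f(S_n)-\E f(\sum_k Z_{k,n})$ over independent Gaussians $Z_{k,n}$ with matching variances for test functions $f\in C_b^3$, where the same splitting at $M$ controls the third-order error; it avoids characteristic functions but needs the same negligibility and Lindeberg estimates.
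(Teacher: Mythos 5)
This theorem is not proved in the paper at all: it is the classical Lindeberg--Feller CLT for triangular arrays, imported verbatim with a citation to Borovkov's textbook and used as a black box in the proof of Theorem~\ref{th:bs-db-clt}. Your argument is the standard characteristic-function proof of that result, and it is correct and complete: the derivation of uniform negligibility $\max_k\sigma_{k,n}^2\to 0$ from (i) and (ii), the split of the Taylor remainder at $|\xi_{k,n}|=M$ using $|e^{ix}-1-ix+x^2/2|\le\min(|x|^3/6,\,x^2)$, the Gaussian comparison $\sum_k|1-\tfrac{t^2}{2}\sigma_{k,n}^2-e^{-t^2\sigma_{k,n}^2/2}|\le\tfrac{t^4}{8}\max_k\sigma_{k,n}^2$, and the telescoping product inequality all fit together exactly as they should. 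One cosmetic remark: the observation that $t^2\sigma_{k,n}^2/2<1$ for large $n$ is not actually needed, since $e^{-t^2\sigma_{k,n}^2/2}$ lies in $(0,1]$ unconditionally and your comparison factors are these exponentials rather than the linearisations $1-\tfrac{t^2}{2}\sigma_{k,n}^2$; but it does no harm. Since the paper offers no proof to compare against, the only point of substance is that you have supplied a self-contained justification for a result the authors simply quote.
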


\begin{proof}[Proof of Theorem~\ref{th:bs-db-clt}]
It is easy to see that the expected number of active bits $\E\n_t$ in BF model
tends to infinity. We can write $\E\n_t$ explicitly as
\begin{displaymath}
\E\n_t = \sum_{k=1}^\infty \P\{\zeta_t^k = 1\} =
\sum_{k=1}^\infty \frac{1}{2}(1-e^{-2p_kt}).
\end{displaymath}
Every term in the latter sum monotonously approaches $1/2$ as $t\to \infty$, thus
the whole sum tends to infinity.

Next, for the DB model, given the assumption~\eqref{eq:sufdbinfexp}, if we fix a small
$\eps>0$ and take $l_{1,\eps}, l_{2,\eps}$ to be as in~\eqref{eq:cardbound}
the left and the right borders
of the interval where the function $xe^{-x}$ is greater than $1/e-\eps$,
then, by the same reasoning as in~\eqref{eq:cardbound}, we obtain:
\begin{align*}
\E\n_t &= \sum_{k=1}^\infty \P\{\zeta_t^k = 1\} =
\sum_{k=1}^\infty p_k te^{-p_kt}\\ 
&\geq (e^{-1}-\eps)\card\{k: \lambda_{1,\eps} \leq p_kt \leq
\lambda_{2,\eps}\} 
\geq C_1(\log (tC))^{\frac{1}{\gamma}-1} \to \infty
\end{align*}
for a constant $C_1$ depending on $\eps$, $\gamma$ and $\alpha$.

The rest of the proof works for both BF and DB models.  It is
sufficient to prove the CLT for the embedded discrete time process
$\{\n_{T_n}\}_{n\geq 1}$ for an arbitrary non-random time sequence
$\{T_n\}_{n\geq 1}$ going to infinity.  Let us fix such a sequence and
denote $\zeta_n := \zeta_{T_n}$ and $\n_n:=\n_{T_n}$, for short.
Introduce random variables
\begin{align*}
  Z_{n,k} & =\one\{\zeta_{n}^k = 1\},\\
  \xi_{n,k} & =\begin{cases}
    \frac{Z_{n,k}-\E Z_{n,k}}{\sqrt{\var \n_n}}, & k<r_n, \\\rule{0mm}{8mm}
    \frac{\sum\limits_{k\geq r_n}(Z_{n,k}-\E Z_{n,k})}{\sqrt{
      \var\n_n}}, & k=r_n.
  \end{cases}
\end{align*}
We leave ourselves a freedom to choose a suitable sequence $\{r_n\}$
later.  Check the conditions of Theorem~\ref{th:bor-tri}. The random
variables $\{\xi_{n,k}\}_{k=1}^{r_n}$ are mutually independent for
every $n$. Condition~(i) holds trivially. As for (ii), one has:
\begin{align}
  \sum_{1\leq k\leq r_n} & \E\big[\xi_{n,k}^2;\,|\xi_{n,k}|>M\big]=
  \overbrace{\sum_{1\leq k\leq r_n-1}\E\big[\xi_{n,k}^2;\,|\xi_{n,k}|>
  M\big]}^{S_1} \notag \\
  & +\underbrace{\E\big[\xi_{n,r_n}^2;\,|\xi_{r_n,n}|>M\big]}_{S_2}. \label{eq:clt-sum-decompose}
\end{align}
By the assumptions $\E\n(t)\to\infty$ as $t\to\infty$.
Moreover,
\begin{displaymath}
  C_2\E\n(t)\leq\var \n(t)=\sum_{k\geq 1}f(p_kt)(1-f(p_kt))\leq\E\n(t),
\end{displaymath}
where $f(x) = \frac{1}{2}(1-e^{-x})$ in BF model, $f(x) = xe^{-x}$ in
DB model, and $C_2=(1-\sup_{x\in \R^+} f(x))$, with the respective
$f$, so that $0<C_2<1$ in both cases.  Hence, by construction of
$\xi_{n,k}$, the sum $S_1$ in~\eqref{eq:clt-sum-decompose} tends to
$0$ as $n$ goes to infinity, because almost surely $\xi_{n,k}\leq
1/\var \n_n\to 0$ and every term in $S_1$ is eventually zero.
Lastly,
\begin{displaymath}
  \E\xi_{r_n,n}^2=\frac{1}{\var\n_n}\sum_{k\geq r_n}f(p_kT_n)
  (1-f(p_kT_n))
\end{displaymath}
and so we can choose such $r_n$ that the
latter sum is no larger than, for instance, $\sqrt{\var \n_n}$, thus
satisfying Condition~(ii) of Theorem~\ref{th:bor-tri} and finishing the proof.
\end{proof}

\begin{Ack} The authors thank Sergey Foss for the discussions from which the
Bit Flipping models we consider here started, as well as for the
follow-up talks and insights on relation of Bit Flipping to other
fields.  The authors are grateful to Robin Pemantle for an idea of a
continuous-time implementation of the process, which proved to be an
irreplaceable tool in the analysis. We are grateful to two anonymous
referees for thorough reading and their valuable comments which
allowed us to significantly improve the presentation of the material.
\end{Ack}

\newpage

\bibliographystyle{plain}
\bibliography{bitflip}

\begin{thebibliography}{1}

\bibitem{asp:96}
S.~Aspandiiarov, R.~Iasnogorodski, and M.~Menshikov.
\newblock Passage-time moments for nonnegative stochastic processes and an
  application to reflected random walks in a quadrant.
\newblock {\em Ann. Probab.}, 24:932--960, 1996.

\bibitem{AthKar:68}
K.~Athreya and S.~Karlin.
\newblock Embedding of urn schemes into continuous time {M}arkov branching
  process and related limit theorem.
\newblock {\em Ann. Math. Statist.}, 39:1801--1817, 1968.

\bibitem{BalPem:07}
J.~Balogh and R.~Pemantle.
\newblock The {K}lee{–-}{M}inty random edge chain moves with linear speed.
\newblock {\em Random Structures and Algorithms}, 30(4):371--390, 2007.

\bibitem{Bor:98}
A.~Borovkov.
\newblock {\em Probability Theory}.
\newblock Amsterdam : Gordon and Breach, 1998.

\bibitem{DezNemTer:08}
A.~Deza, E.~Nematollahi, and T.~Terlaky.
\newblock How good are interior point methods? {K}lee-{M}inty cubes tighten
  iteration-complexity bounds.
\newblock {\em Math. Program., Ser.~A}, 113(1):1--14, 2008.

\bibitem{LyoPemPer:96}
R.~Lyons, R.~Pemantle, and Y.~Peres.
\newblock Random walks on the lamplighter group.
\newblock {\em Ann. Probab}, 24:1993--2006, 1996.

\bibitem{steif2009survey}
J.~Steif.
\newblock A survey of dynamical percolation.
\newblock In {\em Fractal geometry and stochastics IV}, pages 145--174.
  Springer, 2009.

\bibitem{Woe:00}
W.~Woess.
\newblock {\em Random walks on infinite graphs and groups}, volume 138.
\newblock Cambridge university press, 2000.

\end{thebibliography}

\end{document}